\renewcommand*{\NAT@nmfmt}[1]{\textsc{#1}}
\date{Mars 2018}
\title{Relations de Hodge--Riemann \\ et combinatoire des matroïdes}
\author{Antoine CHAMBERT--LOIR}
\address{
Université Paris Diderot, Sorbonne Université, CNRS, \\
Institut de Mathématiques de Jussieu-Paris Rive Gauche, IMJ-PRG, \\
F-75013, Paris, France.}
\email{antoine.chambert-loir@math.univ-paris-diderot.fr}
\subjclass{05A99; 05E99; 14F43; 14F99; 14M25; 14T05}
\keywords{Matroïdes, anneau de Chow, variétés toriques, relations
de Hodge--Riemann, cohomologie d'intersection}
\begin{abstract}
Les matroïdes finis sont des structures combinatoires qui expriment
la notion d'indépendance linéaire.
En 1964, G.-C.~Rota conjectura que les
coefficients du « polynôme caractéristique » d'un matroïde~$M$,
polynôme dont les coefficients énumèrent ses sous-ensembles de rang donné,
forment une suite log-concave.
K.~Adiprasito, J.~Huh et E.~Katz viennent de démontrer
cette conjecture par des méthodes qui, bien qu'entièrement
combinatoires, sont inspirées par la géométrie algébrique.
À partir de l'éventail de Bergman du matroïde $M$, ils définissent
en effet un « anneau de Chow » gradué $A(M)$ pour lequel
ils établissent des analogues de la dualité de Poincaré, du théorème de Lefschetz difficile
et des relations de Hodge--Riemann. Les inégalités de log-concavité recherchées
sont alors analogues aux inégalités de Khovanskii--Teissier.
\end{abstract}
\begin{document}
\maketitle

\bgroup
\itshape
\raggedright
\advance\leftskip .42\textwidth
... alors que le savant classe hors du temps vécu, situe et fixe à l'écart
de la vie. \par
\raggedleft\normalfont
Philippe Jaccottet, \emph{La semaison}\par
\egroup

\section{Matroïdes}
Inventée par~\cite{whitney1935}, la notion de \emph{matroïde}
est une abstraction de la propriété d'indépendance linéaire.
Elle admet plusieurs formalisations équivalentes,
en termes de \emph{bases}, de \emph{parties libres},
de \emph{circuits}, d'une \emph{fonction rang}, etc.
Nous utiliserons ici celle qui met en jeu le concept de \emph{plat}.

\begin{defi}
Soit $E$ un ensemble \emph{fini}.\footnote{%
Dans ce texte, il sera uniquement question de matroïdes finis.
En fait, la formalisation des matroïdes sur un ensemble infini
est récente:  une condition naturelle consiste à imposer
à la propriété d'être une partie libre d'être de caractère fini,
mais \citet{bruhn-diestel-kriesell-pendavingh-wollan2013}
mettent en évidence qu'on obtient une classe plus intéressante
de matroïdes infinis en imposant un axiome d'existence
de parties libres maximales.}
Un matroïde~$M$ sur~$E$
est la donnée d'une partie~$\mathscr P_M$ 
de $\mathfrak P(E)$ --- les \emph{plats} de~$M$
--- vérifiant les propriétés suivantes:
\begin{enumerate}
\def\theenumi{\roman{enumi}}\def\labelenumi{(\theenumi)}
\item L'intersection de toute famille de plats de~$M$ est un plat.
\item Pour tout plat~$P$ de~$M$, distinct de~$M$,
l'ensemble des plats de~$M$ qui sont minimaux 
parmi ceux contenant strictement~$P$ recouvre~$E$.
\end{enumerate}
\end{defi}

On notera en général $\abs M$, voire $M$,
l'ensemble sous-jacent à un matroïde~$M$.

\subsection{}
Rappelons brièvement 
les autres formalisations de la notion de matroïde.
en renvoyant à~\citep{welsh1976}, \citep{white1986,white1987} 
ou \citep{oxley1992}
pour plus de détails.

Soit $M$ un matroïde sur un ensemble~$E$.
Muni de la relation d'inclusion sur~$\mathfrak P(E)$,
l'ensemble des plats de~$M$ est un ensemble ordonné.
C'est un \emph{treillis} : 
toute partie possède une borne inférieure (notée~$\wedge$), 
l'intersection de ses membres,
et une borne supérieure (notée~$\vee$),
l'intersection de la famille des plats de~$M$
qui contiennent chacun de ses membres. 
Si $X$ est une partie de~$E$, on note $\langle X\rangle$
le plat engendré par~$X$, 
c'est-à-dire le plus petit plat de~$M$ qui contient~$X$.
Au matroïde~$M$, on associe naturellement deux fonctions
\emph{rang} et \emph{corang}: le rang~$\rk_M(P)$ d'un plat~$P$ est la plus
grande longueur d'une chaîne de plats de sommet~$P$,
son corang~$\cork_M(P)$ 
est la plus grande longueur d'une chaîne de plats de
base~$P$.
On définit plus généralement le rang (resp. le corang) 
d'une partie~$A$ de~$M$ comme celui du plat qu'elle engendre.

Une partie~$L$ de~$E$ est dite \emph{liée}
s'il existe une partie $L'\subsetneq L$ 
telle que $\langle L'\rangle=\langle L\rangle$;
elle est dite \emph{libre} sinon.
L'ensemble des parties libres vérifient les propriétés suivantes:
\begin{enumerate}
\def\theenumi{\roman{enumi}}\def\labelenumi{(\theenumi)}
\item $\emptyset$ est libre;
\item Toute partie d'une partie libre est libre;
\item Si $A$ et $B$ sont des parties libres de $M$
telles que $\Card(B)>\Card(A)$, il existe
$x\in B\setminus A$ tel que $A\cup\{x\}$ soit libre
(variante du « lemme d'échange »).
\end{enumerate}
Inversement, toute partie de~$\mathfrak P(E)$ vérifiant ces
trois propriétés est l'ensemble des parties libres d'un unique matroïde sur~$E$.

Une \emph{base} de~$M$ est une partie libre maximale.
Il en existe; on déduit du lemme d'échange que toutes les bases
de~$M$ ont même cardinal et que toutes les chaînes maximales
de plats ont même cardinal, 
égal au rang de~$M$ (c'est-à-dire du plat~$E$ de~$M$).
Autrement dit, pour tout plat~$P$ de~$M$, 
on a $\rk_M(P)+\cork_M(P)=\rk_M(\abs M)$
(le treillis associé à~$M$ est « caténaire »).
De plus, pour tout couple $(P,Q)$ de plats de~$M$, on a la
relation
\begin{equation}
\rk_M(P)+\rk_M(Q) \geq \rk_M(P\wedge Q)+\rk_M(P\vee Q)\ ;
\end{equation}
le treillis associé à~$M$ est dit \emph{sous-modulaire}.
Inversement, tout treillis caténaire sous-modulaire
est le treillis des plats d'un matroïde.

\begin{exem}
Soit $V$ un espace vectoriel (resp. un espace affine,
resp. un espace projectif) sur un corps~$K$
et soit $\Phi=(v_e)_{e\in E}$ une famille finie d'éléments de~$V$.
Il existe un matroïde~$M$ dont
les plats sont les parties de~$E$ de la forme $\{e\in E\sozat v_e\in W\}$,
où $W$ parcourt l'ensemble des sous-espaces vectoriels (resp. affines,
resp. projectifs) de~$V$.
Les parties libres de ce matroïde sont les sous-familles 
libres (resp. affinement libres, resp. projectivement libres) de~$\Phi$.
Dans le cas vectoriel,
le rang d'un plat~$P$ est la dimension de l'espace vectoriel engendré
par la famille $(v_e)_{e\in P}$;
dans le cas affine resp. (projectif), on a $\rk_M(\emptyset)=0$
et $\rk_M(P)-1$ est la dimension du sous-espace affine (resp. projectif)
de~$V$ engendré par la famille~$(v_e)_{e\in P}$.

De tels matroïdes sont dits \emph{représentables} sur~$K$.

Lorsqu'on prend pour $\Phi$ l'ensemble
des points de l'espace projectif $\P_2(\F_2)$, on
obtient le  \emph{matroïde de Fano}. 
Il n'est représentable que sur un corps de caractéristique~$2$.

Les matroïdes représentables apparaissent naturellement 
via l'arrangement d'hyperplans qu'ils définissent dans l'espace vectoriel
dual~$V^\vee$ (resp. l'espace affine, resp. projectif).
Les plats sont alors les intersections d'hyperplans de l'arrangement
et leur rang est leur codimension.

D'après~\citet{nelson2018},
lorsque $n$ tend vers l'infini, la proportion
du nombre de matroïdes sur l'ensemble $\{1,\dotsc,n\}$
qui sont représentables (sur \emph{un} corps non précisé)
tend vers~$0$.
\end{exem}

\begin{exem}
Soit $G$ un graphe fini et soit $E$ l'ensemble des arêtes de~$G$.
Il existe un unique matroïde $M(G)$ sur~$E$ 
dont les parties libres sont les forêts de~$G$.
De manière équivalente,
ses circuits, c'est-à-dire les parties liées minimales,
sont les cycles dans le graphe~$G$;
ses plats sont les parties~$P$
telles que les extrémités de toute arête $e\in E\setminus P$ 
n'appartiennent pas à la même composante
connexe du sous-graphe de~$G$ ayant même ensemble de sommets que~$G$
et~$P$ pour ensemble d'arêtes.

Si $G_P$ est le plus grand sous-graphe de~$G$ d'ensemble
d'arêtes~$P$, $\rk_{M(G)}(P)+\Card(\pi_0(G_P))$
est le nombre de sommets de~$G$.

Le matroïde~$M(G)$ est représentable sur tout corps.
Soit~$S$ l'ensemble des sommets de~$G$.
Soit $K$ un corps et notons $(x_s)$ la base canonique
du $K$-espace vectoriel~$K^S$. 
Fixons une orientation~$F$
de~$G$ et identifions une flèche~$f\in F$ à l'arête correspondante
$\{f,\overline f\}$.
Pour toute flèche~$f\in F$ d'origine~$o$ et de terme~$t$,
posons $v_f=x_t-x_o\in K^S$. Alors le matroïde
associé à la famille~$(v_f)_{f\in F}$ s'identifie au matroïde~$M(G)$.
\end{exem}

\subsection{}
Mentionnons quelques autres constructions de matroïdes.

\def\restr{\mathord{\,|\,}}
\let\del\backslash
\let\contr\slash

\begin{enumerate}\def\theenumi{\alph{enumi}}\def\labelenumi{\theenumi)}
\item
Soit $M_1$ et $M_2$ des matroïdes. 
Il existe alors un unique matroïde $M$ sur 
l'ensemble $\abs {M_1}\coprod\abs{M_2}$
dont les plats
sont les réunions d'un plat de~$M_1$ et d'un plat de~$M_2$.
On le note $M_1\oplus M_2$.

\item
Soit $M$ un matroïde. 
Il existe, sur l'ensemble~$\abs M$, une unique structure de matroïde
dont les bases sont les complémentaires des bases de~$M$.
On l'appelle le \emph{matroïde dual}~$M^*$ de~$M$.
Sa fonction rang est liée à celle de~$M$ par
la relation 
\[ \rk_{M^*}(A)-\rk_{M}(\abs M\setminus A)=\Card(\abs M)-r_M(\abs M),\]
pour toute partie~$A$ de~$\abs M$.

\item
Soit $M$ un matroïde et soit $F$ une partie de~$\abs M$.
L'ensemble des plats de~$M$ qui sont contenus dans~$F$
est une structure de matroïde sur~$F$, que l'on note 
$M\restr F$;
c'est la \emph{restriction} de~$M$ à~$F$;
on la voit aussi comme  
la \emph{suppression} de~$\abs M\setminus F$ dans~$M$
et on la note alors $M\del (\abs M\setminus F)$.
Sa fonction rang est la restriction à~$\mathfrak P(F)$
de la fonction rang de~$M$.

\item
Soit $M$ un matroïde et soit $F$ une partie de~$\abs M$.
L'ensemble des parties~$P$ de~$\abs M\setminus F$
telles que $P\cup F$ soit un plat de~$M$
est une structure de matroïde sur l'ensemble $\abs M\setminus F$;
autrement dit, le treillis de ses plats est le sous-treillis de~$\mathscr P_M$
formé des  plats de~$M$ qui contiennent~$F$.
On l'appelle la \emph{contraction} de~$F$ dans~$M$
et on la note~$M\contr F$.
Sa fonction rang vérifie $\rk_{M/F}(A)=\rk_M(A\cup F)-\rk_M(A)$,
pour toute partie~$A$ de~$\abs M\setminus F$.
\end{enumerate}

\subsection{}
Soit $M$ un matroïde.
Une boucle de~$M$ est un point~$e\in\abs M$ qui appartient à tout plat.
Soit $m=\langle\emptyset\rangle$ le plus petit plat de~$M$.
Le matroïde contracté $M/m$ 
est sans boucle et a même treillis des plats que~$M$.

Supposons $M$ sans boucle.
La relation $\langle x\rangle=\langle y\rangle$ dans~$E$
est une relation d'équivalence dont les classes
d'équivalence sont les plats de~$M$ de rang~$1$. Lorsque ces classes
d'équivalence sont réduites à un élément,
on dit que le matroïde est une \emph{géométrie combinatoire.}
En général, le matroïde~$M$ induit
une géométrie combinatoire~$\overline M$ sur l'ensemble quotient~$\overline E$;
la surjection canonique de~$E$ sur~$\overline E$
induit un isomorphisme du treillis des plats de~$M$ sur celui de~$\overline M$.

\begin{defi}
Soit $M$ un matroïde.
Le \emph{polynôme caractéristique} de~$M$ est défini par
\[ \chi_M(T) = \sum_{A\subset \abs M} (-1)^{\Card(A)} T^{\cork_M(\langle A\rangle)}.
\]
\end{defi}

C'est un polynôme unitaire de degré~$\leq\rk(M)$. 
Lorsque $\abs M$ est vide, on a $\chi_M(T)=1$.
Il y a deux matroïdes sur un ensemble~$\{e\}$ de cardinal~$1$:
si $\mathscr P_M=\{\emptyset, \abs M\}$, on a $\rk(M)=1$ et $\chi_M(T)=T-1$;
si $\mathscr P_M=\{\abs M\}$, on a $\rk(M)=0$ et $\chi_M(T)=0$.
En général, le polynôme~$\chi_M(T)$ se calcule par récurrence à partir 
des deux règles :
\[ \chi_{M_1\oplus M_2}(T) = \chi_{M_1}(T) \chi_{M_2}(T)  \]
et
\[ \chi_M(T) = \chi_{M\backslash e}(T) - \chi_{M/e}(T) \]
pour tout point $e\in \abs M$ qui n'appartient pas à toute base de~$M$.
(Ces deux règles généralisent celles qui régissent le polynôme chromatique d'un graphe.)
 
Si $\abs M$ n'est pas vide, on a $\chi_M(1)=0$; 
on définit alors le polynôme caractéristique réduit par :
\[ \overline{\chi_M}(T) = \chi_M(T)/(T-1). \]

\begin{exem}
Supposons que $M$ soit le matroïde~$M(G)$ associé à un graphe fini~$G$.
Alors, 
pour tout entier~$q$,
\[ \chi_G(T) =  T^{\Card(\pi_0(G))} \chi_M(T) \]
est le polynôme chromatique de~$G$: pour tout entier~$q$,
$\chi_G(q)$ est le nombre de coloriages de l'ensemble des sommets
de~$G$ avec~$q$ couleurs tels que deux sommets reliés par une arête
soient de couleurs distinctes.
\end{exem}

\begin{exem}
Soit $K$ un corps, soit $n$ un entier, 
soit $(v_1,\dotsc,v_r)$ une famille libre de~$K^n$ 
et soit $V$ le sous-espace vectoriel de~$K^n$ qu'elle engendre.
Notons~$M$ le matroïde représentable correspondant.
Le polynôme caractéristique de~$M$ s'interprète géométriquement 
dans l'anneau de Grothendieck $K_0(\mathrm{Var}_K)$ des $K$-variétés.

Rappelons que cet anneau est défini comme le quotient du groupe
abélien libre sur l'ensemble $\mathrm{Var}_K$
des classes d'isomorphie de $K$-variétés (=~$K$-schémas de type fini)
par la relation de découpage $[X]=[X\setminus Y]+[Y]$ si $X$
est une $K$-variété et $Y$ un fermé de~$X$, muni du produit
$[X][Y]=[X\times_K Y]$. Si $X$ est une $K$-variété, on note
$\operatorname e(X)$ sa classe dans $K_0(\mathrm{Var}_K)$.
L'application~$\operatorname e$ 
est une caractéristique d'Euler universelle. En particulier,
lorsque $K=\C$, l'application qui, à une $\C$-variété~$V$,
associe son polynôme de Hodge--Deligne $E_V(u,v)$, se factorise par
un homomorphisme d'anneaux de~$K_0(\mathrm{Var}_\C)$ dans~$\Z[u,v]$.
De même, lorsque $K$ est un corps fini, l'application qui, à une
$K$-variété~$V$, associe le cardinal de~$V(K)$, se factorise
par un homomorphisme d'anneaux de~$K_0(\mathrm{Var}_K)$ dans~$\Z$.

Notons $\mathbf L=\operatorname e(\mathbf A^1_K)$ 
la classe de la droite affine.
Dans l'anneau de Grothendieck $K_0(\mathrm{Var}_K)$ des $K$-variétés,
on a alors la relation
\[ \operatorname e(V \cap \mathbf G_{\mathrm m,K}^n) = \chi_M(\mathbf L). \]
(Cela se déduit de la formule d'inversion de Möbius dans 
le treillis des plats du matroïde~$M$ et de la formule~\eqref{eq.chiM};
voir plus bas.)
Comme l'unique homomorphisme
d'anneaux de~$\Z[T]$ dans~$K_0(\mathrm{Var}_K)$ qui applique~$T$
sur~$\mathbf L$ est injectif, cette relation caractérise~$\chi_M$.

Lorsque $K=\C$, le polynôme de Hodge--Deligne de la variété
quasi-projective $V\cap (\C^\times)^n$ est donc égal à $\chi_M(uv)$.
Lorsque $K=\F_q$ est un corps fini de cardinal~$q$, on a  de même
$\Card(V\cap (K^\times)^n)=\chi_M(q)$.

En passant au quotient par l'action par homothéties de~$\mathbf G_{\mathrm m,K}$
sur l'espace affine~$\mathbf A^n_K$, on en déduit aussi 
une interprétation géométrique 
du polynôme caractéristique réduit de~$M$:
\[ \operatorname e(\P(V\cap \mathbf G_{\mathrm m,K}^n))= \overline{\chi_M}(\mathbf L). \]
\end{exem}

Voici le théorème principal de cet exposé:
\begin{theo}[\citealt*{adiprasito-huh-katz2015}]
\label{theo.ahk}
Soit $M$ un matroïde de \mbox{rang~$>0$;} posons $r=\rk(M)-1$ et
définissons des nombres entiers $\mu^k(M)$, pour $0\leq k\leq r$, par
\[ \overline{\chi_M}(T) = \sum_{k=0}^r (-1)^k \mu^k(M) T^{r-k}. \]
Alors, la suite $(\mu^0(M),\dotsc,\mu^r(M))$ est \emph{log-concave :}
\begin{enumerate}
\def\theenumi{\roman{enumi}}\def\labelenumi{(\theenumi)}
\item Pour tout entier~$k$ tel que $0\leq k\leq r$, on a $\mu^k(M)>0$;
\item Pour tout entier~$k$ tel que $0<k<r$, on a $\mu^{k-1}(M)\mu^{k+1}(M)
\leq \mu^k(M)^2$.

\noindent En particulier, cette suite est \emph{unimodale}:
\item Il existe un entier~$\ell$
tel que 
\[ \mu^0(M)\leq \mu^1(M) \leq\dotsb \leq \mu^{\ell}(M)
 \geq \mu^{\ell+1}(M)\geq \dotsb \geq \mu^r(M). \]
\end{enumerate}
\end{theo}

Plusieurs corollaires de ce théorème avaient été conjecturés
dans les années 1970, 
\citep{rota1971,heron1972,mason1972,welsh1976},
parfois d'abord dans le cas des graphes
et de leurs polynômes chromatiques
\citep{read1968,hoggar1974}:
\begin{coro}[conjecture de Heron--Rota--Welsh]
\label{coro.welsh}
Soit $M$ un matroïde et soit $r=\rk(M)$.
Définissons des nombres entiers $w_k(M)$, pour $0\leq k\leq r$, par
\[ {\chi_M}(T) = \sum_{k=0}^r (-1)^k w_k(M) T^{r-k}. \]
Alors, la suite $(w_0(M),\dotsc,w_r(M))$ est \emph{log-concave}
et \emph{unimodale}.
\end{coro}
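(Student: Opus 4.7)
Le plan consiste à déduire le corollaire directement du théorème~\ref{theo.ahk} en examinant la relation entre les polynômes $\chi_M$ et $\overline{\chi_M}$. On peut d'abord supposer que $M$ n'a pas de boucle : si $M$ possède une boucle, un regroupement immédiat des termes dans la définition de $\chi_M$ fait apparaître le facteur $(1+(-1))^{\Card L}$ où $L$ est l'ensemble des boucles, donc $\chi_M=0$ et le corollaire est trivial. Si $\abs M=\emptyset$, on a $\chi_M=1$ et il n'y a rien à démontrer ; on suppose donc désormais $\abs M\neq\emptyset$, ce qui entraîne $\rk(M)\geq 1$ et permet d'écrire $\chi_M(T)=(T-1)\overline{\chi_M}(T)$.

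Posons $r=\rk(M)\geq 1$. En développant l'égalité
\[ \chi_M(T) = (T-1)\sum_{k=0}^{r-1} (-1)^k \mu^k(M)\, T^{r-1-k} \]
et en identifiant les coefficients, on obtient la relation
\[ w_k(M) = \mu^k(M) + \mu^{k-1}(M), \qquad 0\leq k\leq r, \]
avec la convention $\mu^{-1}(M)=\mu^r(M)=0$. Par le théorème~\ref{theo.ahk}, tous les $\mu^k(M)$ avec $0\leq k\leq r-1$ sont strictement positifs, donc tous les $w_k(M)$ le sont également, ce qui établit la positivité et permet de ne plus s'inquiéter de zéros internes (condition nécessaire pour que la log-concavité entraîne l'unimodalité).

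Il reste à vérifier que l'opération $\mu\mapsto w$ préserve la log-concavité : c'est l'étape technique principale, mais elle est purement élémentaire. On développe
\[ w_k^2 - w_{k-1}w_{k+1} = \bigl((\mu^k)^2-\mu^{k-1}\mu^{k+1}\bigr) + \bigl((\mu^{k-1})^2-\mu^{k-2}\mu^k\bigr) + \bigl(2\mu^{k-1}\mu^k-\mu^{k-1}\mu^k-\mu^{k-2}\mu^{k+1}\bigr), \]
en notant $\mu^j$ pour $\mu^j(M)$. Les deux premières parenthèses sont positives par la log-concavité de la suite $(\mu^k(M))$ donnée par le théorème~\ref{theo.ahk}. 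Pour la troisième, la même log-concavité fournit les inégalités $\mu^{k-1}/\mu^{k-2}\geq\mu^{k+1}/\mu^k$, soit $\mu^{k-1}\mu^k\geq\mu^{k-2}\mu^{k+1}$, d'où la positivité cherchée. La log-concavité de $(w_k(M))$ en résulte, et l'unimodalité s'en déduit par l'argument standard : sur une suite strictement positive, log-concavité implique unimodalité (une fois le maximum atteint, la suite $w_{k+1}/w_k$ décroît et reste $\leq 1$). Le seul « obstacle » réel du raisonnement est cette vérification algébrique, et le point légèrement délicat est de se rappeler que la log-concavité multiplicative de $(\mu^k)$ entraîne l'inégalité croisée $\mu^{k-1}\mu^k\geq\mu^{k-2}\mu^{k+1}$ utilisée dans le troisième terme.
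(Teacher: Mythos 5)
Votre démonstration est correcte et suit exactement la voie que le texte laisse implicite (il énonce le corollaire sans détailler l'argument) : la relation $w_k(M)=\mu^k(M)+\mu^{k-1}(M)$ issue de $\chi_M=(T-1)\overline{\chi_M}$, la stricte positivité des $\mu^k(M)$ donnée par le théorème~\ref{theo.ahk}, puis le fait élémentaire que la somme d'une suite log-concave strictement positive et de sa décalée reste log-concave, donc unimodale. La réduction préalable au cas sans boucle et la vérification de l'inégalité croisée $\mu^{k-1}\mu^k\geq\mu^{k-2}\mu^{k+1}$ sont bien traitées ; rien à redire.
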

Les entiers~$w_k(M)$ sont appelés nombres de Whitney de première
espèce du matroïde~$M$; ils sont positifs ou nuls (voir plus bas).

\begin{coro}[conjecture de Welsh--Mason]
\label{coro.welsh-mason}
Soit $M$ un matroïde et soit $r=\rk(M)$.
Pour tout entier~$k$, notons $f_k(M)$ le nombre de parties libres de~$M$
de cardinal~$k$. La suite $(f_0(M),\dotsc,f_{r}(M))$ est log-concave.
\end{coro}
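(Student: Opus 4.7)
La stratégie consiste à déduire ce corollaire du corollaire~\ref{coro.welsh} (conjecture de Heron--Rota--Welsh), établi par le théorème~\ref{theo.ahk}, au moyen d'un matroïde auxiliaire~$\tilde M$ dont les nombres de Whitney de première espèce codent la suite~$(f_k(M))$ des nombres de parties libres de~$M$.

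La construction adéquate, due à Brylawski, est celle de la \emph{coextension libre}: on pose $\tilde M = (M^*+e)^*$, où $M^*+e$ désigne l'extension libre du dual~$M^*$ par un nouveau point~$e$ (c'est-à-dire le matroïde sur $\abs{M^*}\sqcup\{e\}$ dont les parties libres sont les parties libres de~$M^*$, ainsi que les ensembles $A\cup\{e\}$ avec $A$ libre dans~$M^*$ et $\Card(A)<\rk(M^*)$). Il en résulte un matroïde~$\tilde M$ de rang $r+1$ sur $\abs M\sqcup\{e\}$ pour lequel $\tilde M\contr e=M$, tandis que $\tilde M\del e$ s'identifie à une certaine troncature de~$M$. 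Appliquer la formule de suppression-contraction au point~$e$, puis calculer explicitement $\chi_{\tilde M\del e}$ en fonction des nombres~$f_j(M)$, devrait alors fournir une identité combinatoire liant les nombres de Whitney $w_k(\tilde M)$ à la suite~$(f_j(M))$. Cette identité se reconnaît essentiellement comme une reformulation de la spécialisation classique
\[T_M(T+1,1)=\sum_{k=0}^r f_k(M)\,T^{r-k}\]
du polynôme de Tutte~$T_M$ de~$M$.

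Le point le plus délicat, et l'obstacle principal, sera de vérifier que cette identité transmet la log-concavité: de la log-concavité des~$(w_k(\tilde M))$, qui résulte du corollaire~\ref{coro.welsh} appliqué à~$\tilde M$, on doit pouvoir extraire celle de~$(f_k(M))$. Cela fait, la positivité des~$f_k(M)$ est évidente (toute partie d'une partie libre est libre), et l'unimodalité en découle par l'argument élémentaire déjà utilisé pour le théorème~\ref{theo.ahk}.
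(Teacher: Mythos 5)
Votre construction de départ est la bonne — la coextension libre $\tilde M=(M^*+e)^*$ est exactement le matroïde auxiliaire $M'$ du texte, de rang $r+1$, dont les plats sont $\emptyset$ et les $P\cup\{e\}$ pour $P$ plat de~$M$ — mais vous visez le mauvais énoncé, et l'« obstacle principal » que vous signalez vous-même est réel et ne se résout pas par la voie que vous proposez. Les nombres de Whitney de première espèce de~$\tilde M$ valent $w_k(\tilde M)=f_k(M)+f_{k-1}(M)$ (puisque $\chi_{\tilde M}=(T-1)\,\overline{\chi_{\tilde M}}$); or la log-concavité de la suite $(f_k+f_{k-1})$ n'entraîne pas celle de $(f_k)$: la convolution par $(1,1)$ préserve la log-concavité dans un sens, mais pas dans l'autre (par exemple $(1,1,3)$ n'est pas log-concave alors que $(1,2,4,3)$ l'est). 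Le détour par le corollaire~\ref{coro.welsh} est donc voué à l'échec tel quel.

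La démonstration du texte évite entièrement ce problème en appliquant directement le théorème~\ref{theo.ahk} à~$\tilde M$: d'après la remarque de Brylawski citée dans le texte, le polynôme caractéristique \emph{réduit} de la coextension libre vérifie
\[ \overline{\chi_{\tilde M}}(T)=\sum_{k=0}^r(-1)^k f_k(M)\,T^{r-k}, \]
autrement dit $\mu^k(\tilde M)=f_k(M)$ pour tout~$k$, et la log-concavité de $(f_k(M))$ est alors l'énoncé même du théorème~\ref{theo.ahk} pour~$\tilde M$; aucune « transmission » de log-concavité n'est nécessaire. Votre identité de Tutte $T_M(T+1,1)=\sum f_k(M)T^{r-k}$ est correcte et c'est bien elle qui sous-tend cette remarque, mais il faut la brancher sur $\overline{\chi_{\tilde M}}$ et non sur $\chi_{\tilde M}$; de même, la description par suppression-contraction en~$e$ est superflue une fois cette identité admise.
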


Suivant~\cite{lenz2013}, ce corollaire se déduit du théorème~\ref{theo.ahk}
en considérant le matroïde $M'$
sur l'ensemble $E'=E\coprod\{e\}$ dont les plats
sont la partie vide et les parties $P\cup\{e\}$, pour tout plat~$P$ de~$M$
(« coextension libre » de~$M$).
Ce matroïde est de rang~$r+1$ et,
d'après~\citep[Remark~6.15.3c]{brylawski1982},
son polynôme caractéristique réduit
vérifie
\[ \overline{\chi_{M'}}(T) = \sum_{k=0}^r (-1)^k f_k(M) T^{r-k}. \]
%

\subsection{}
Bien que de nature purement combinatoire,
la démonstration du théorème~\ref{theo.ahk}
est inspirée par la géométrie algébrique.

Supposons en effet 
que le matroïde~$M$ soit représentable sur un corps~$K$,
associé à l'arrangement d'hyperplans défini
par la trace, sur un sous-espace projectif~$V$ de dimension~$r$,
des hyperplans de coordonnées de~$\P_{n}(K)$.
L'involution~$\iota$ de Cremona est l'automorphisme birationnel
de $\P_{n,K}$
donné par $[x_0:\dotsb:x_n]\mapsto [x_0^{-1}:\dotsb:x_n^{-1}]$.
L'adhérence de Zariski dans~$\P_{n,K}\times\P_{n,K}$ du graphe
de la restriction de~$\iota$ à~$V$ est alors une compactification
lisse~$\tilde V$ de~$V\cap \mathbf G_{\mathrm m,K}^n$ dont le bord
est un diviseur à croisements normaux stricts.
(Cette construction est due à \cite{deconcini-procesi1995a}.)

\begin{theo}[\citealt*{huh-katz2012}]
\label{theo.hk}
On a l'égalité 
\[ [\tilde V] = \sum_{k=0}^r \mu^k(M) [\P_{r-k}\times\P_k] \]
dans le groupe $A_r(\P_n\times\P_n)$
des classes de cycles de dimension~$r$ sur~$\P_n\times\P_n$.
\end{theo}
De manière équivalente, on a
\[ \mu^k(M) = \deg ( c_1(\mathscr L_1)^{r-k} c_1(\mathscr L_2)^k \cap [\tilde V]) ,\]
où $\mathscr L_1$ et $\mathscr L_2$ sont les fibrés en droites
sur~$\P_n\times\P_n$ déduits du fibré en droites~$\mathscr O_{\P_n}(1)$
par la première et la seconde projection.
La log-concavité de la suite $(\mu^0(M),\dotsc,\mu^r(M))$
se déduit alors du théorème de l'indice de Hodge,
sous la forme des \emph{inégalités de Khovanskii-Teissier}
\citep{khovanskii1988,teissier1979}.

Lorsque $K$ est de caractéristique zéro,
\citet{huh2012} avait donné une première démonstration de
la log-concavité de la suite $(\mu^k(M))$,
dans laquelle les coefficients~$\mu^k(M)$ apparaissent
comme les nombres de Milnor de la trace, sur un sous-espace
général de dimension~$k$,
de la réunion des hyperplans de l'arrangement définissant~$M$.
L'inégalité de log-concavité se déduit alors de l'inégalité
de Teissier pour les multiplicités~\citep[Appendice]{eisenbud-levine-teissier1977}.

On trouve par ailleurs dans~\citet{huh2012} un très joli théorème
qui caractérise les classes de cycles dans~$\P_m\times\P_n$
qui sont, à un scalaire près, représentés par une sous-variété
irréductible:
\begin{theo}
Soit $r$ un entier tel que $0\leq r\leq \inf(m,n)$ et soit  
$(a_0,\dotsc,a_r)$ une suite d'entiers relatifs. Pour
que la classe de cycle
\[ \alpha = \sum_{k=0}^r a_k [\P_{r-k}\times\P_k ] \in A_r(\P_m\times\P_n) \]
soit un multiple positif de la classe d'une sous-variété irréductible~$V$
de~$\P_m\times\P_n$, il faut et il suffit que l'une des deux conditions
suivantes soit satisfaite:
\begin{itemize}
\item La classe $\alpha$ est un multiple positif de
$[\P_m\times\P_n]$, $[\P_m\times\P_0]$,
$[\P_0\times\P_n]$ ou $[\P_0\times\P_0]$;
\item La suite $(a_0,\dotsc,a_r)$ est log-concave
et l'ensemble des entiers~$k$ tels que $a_k=0$ est un intervalle.
\end{itemize}
\end{theo}
Autrement dit, les inégalités de Khovanskii-Teissier caractérisent
précisément les classes de cycles effectifs.
Dans un esprit similaire, mentionnons le contre-exemple
de~\citet{babaee-huh2017} à une version de la conjecture de Hodge
pour les courants positifs: un courant fortement positif de type~$(2,2)$
sur une  variété complexe torique de dimension~4, lisse et projective, 
qui n'appartient pas au cône convexe fermé engendré par les
courants d'intégration sur les sous-variétés.

\subsection{}
Ainsi, pour démontrer le théorème~\ref{theo.ahk},
\citet*{adiprasito-huh-katz2015}
associent à une géométrie combinatoire~$M$
une $\R$-algèbre graduée~$A(M)_\R$ qui vérifie
les énoncés analogues à la dualité de Poincaré,
le théorème de Lefschetz difficile et les inégalités
de Hodge--Riemann. 

De fait, $A(M)_\R$ sera l'anneau de Chow  (à coefficients réels)
d'une variété torique lisse~$X_M$ (sur un corps~$K$ arbitraire)
introduite par~\citet{feichtner-yuzvinsky2004}.
En général, la variété $X_M$ n'est pas propre,
et elle est de dimension~$>\rk_M(M)$,
de sorte    que son anneau de Chow $A(M)$,
ou son anneau de cohomologie $H(M)$, n'a aucune raison de se comporter
comme celui d'une variété projective lisse de dimension~$\rk(M)$.

Lorsque le matroïde~$M$ est représentable,
$X_M$ admet une sous-variété projective lisse~$Y_M$
(c'est d'ailleurs la variété~$\tilde V$ du paragraphe précédent)
telle que l'application $z\mapsto z \cap [Y_M]$
induise un isomorphisme de $ A(X_M)$ sur $ A(Y_M)$.
En revanche,
lorsque $M$ n'est pas représentable sur~$K$,
il n'existe pas de $K$-variété
propre et lisse~$Y$ et de morphisme de variétés~$f\colon Y\to X_M$
tel que $f^*$ induise un isomorphisme de~$A(X_M)$ sur~$A(Y)$
 \citep*[th.~5.12]{adiprasito-huh-katz2015}.

\subsection{}
Le principe n'est bien sûr pas nouveau
de puiser l'inspiration d'une preuve de résultats de nature combinatoire
dans la  géométrie algébrique,
par exemple via le dictionnaire qui, à  
tout polytope~$P$ de~$\R^r$, à sommets entiers et de dimension~$r$,
associe une variété torique projective polarisée~$(X_P,L)$.

Dans ce dictionnaire, la dimension 
$h^0(X_P,L^{\otimes n})$  
de l'espace de sections globales de la puissance $n$-ième de~$L$
correspond au nombre de points entiers
du polytope~$nP$ et, via la formule de Hilbert--Samuel,
relie le degré de~$X_P$ au volume de~$P$. 
Plus généralement, les volumes mixtes correspondent
à des nombres d'intersection.
C'est ainsi que \citet{khovanskii1988} et \citet{teissier1979} 
déduisent les inégalités
d'Alexandrov--Fenchel et de Brunn--Minkowski
du théorème de l'indice de Hodge sur les surfaces
et du théorème de Bertini.

Citons ainsi la conjecture de {McMullen} 
décrivant ce que peut être le nombre~$f_i$ de 
faces de dimension~$i$ donnée d'un polytope \emph{simple}~$P$ de
dimension~$d$. 
McMullen considère une suite $(h_0,\dotsc,h_d)$ obtenue par
combinaisons linéaires adéquates des~$f_i$ et postule trois familles
de conditions sur cette suite pour que la suite initiale $(f_0,\dotsc,f_d)$
soit la suite des nombres de faces d'un polytope simple;
les premières, $h_k=h_{d-k}$, sont 
les \emph{relations de Dehn--Sommerville;}
la seconde famille s'écrit $h_0\leq h_1\leq\dotsb \leq h_{\lfloor d/2\rfloor}$;
la troisième est un peu technique et
ce n'est pas la peine de la recopier ici.

\cite{billera-lee1980} ont prouvé qu'elles sont suffisantes,
et leur nécessité est démontrée par \cite{stanley1980b}
à l'aide de la géométrie algébrique. 
Lorsque le polytope est à sommets
entiers, il observe en effet
que l'entier~$h_k$ est la dimension de l'espace de cohomologie
$H^k(X_Q)_\R$ 
de la variété torique~$X_Q$ (sur~$\C$, disons) 
associée au polytope~$Q$ polaire de~$P$,
de sorte que les conditions de McMullen découlent respectivement de trois 
propriétés cohomologiques de la cohomologie de~$X_Q$:
la dualité de Poincaré,
le théorème de Lefschetz difficile,
et le fait que l'algèbre de cohomologie $H(Y_Q)_\Q$ à coefficients
rationnels soit engendrée par $H^2(Y_Q)$.
La variété~$Y_Q$ est projective, mais n'est pas nécessairement lisse,
mais l'hypothèse que le polytope~$P$ est simple assure que la variété~$Y_Q$
est une « {orbifolde} », c'est-à-dire localement
le quotient d'une variété lisse par l'action d'un groupe fini, 
de sorte que ces énoncés cohomologiques restent valides.

Par homothéties et approximation, on peut parfois
déduire du cas d'un polytope à sommets entiers
le cas d'un polytope à sommets réels quelconques,
mais il existe des polytopes dont la combinatoire ne peut 
pas être obtenue par une déformation en un polytope à sommets rationnels.
Cette complication ne se produit pas dans les deux exemples précédents;
pour la conjecture de McMullen, l'hypothèse que le polytope
est simple est essentielle.

Ultérieurement, \cite{stanley1987} a généralisé cette
étude aux polytopes non nécessairement simples:
lorsque $P$ est à sommets entiers,
l'entier~$h_k$ est alors la dimension de l'espace de cohomologie
d'intersection $IH^k(X_Q)_\R$.
L'extension de cette relation aux polytopes à sommets arbitraires
a motivé  d'une part
une approche combinatoire de~\citet{mcmullen1993},
et d'autre part le développement d'une « cohomologie d'intersection
des polytopes »  et la preuve du théorème de Lefschetz difficile
dans ce contexte \citep{karu2004}.

\subsection{}
Le titre de ce rapport mentionne les inégalités de Hodge--Riemann.
Comme on le verra plus tard, ces inégalités renforcent le théorème
de Lefschetz difficile: si ce dernier signifie  qu'une certaine
forme bilinéaire est non dégénérée, les inégalités de Hodge--Riemann
en précisent la signature.

En géométrie kählérienne 
le théorème de Lefschetz difficile est démontré \emph{avant}
les inégalités de Hodge--Riemann.
C'est a fortiori le cas en géométrie algébrique,
en particulier sur un corps de caractéristique positive où
les inégalités de Hodge--Riemann sont encore une conjecture.
 
En revanche, il semble que les approches combinatoires
du théorème de Lefschetz difficile ne puissent faire l'économie
des inégalités de Hodge--Riemann.
Outre le théorème de~\citet{karu2004} déjà mentionné,
mentionnons le rôle crucial
que jouent ces inégalités dans la preuve du théorème
de décomposition que proposent~\citet{decataldo-migliorini2005}
(voir aussi l'exposé de~\citet{williamson2017} dans ce séminaire).
Citons enfin la preuve par~\citet{elias-williamson2014}
de la positivité des coefficients des polynômes de Kazhdan--Lusztig
associés à un système de Coxeter général;
cf. également le rapport de~\citet{riche2018} dans ce séminaire.

Les travaux dont il est question dans ce rapport
suggèrent l'intérêt d'un analogue de la
théorie de Hodge en géométrie tropicale.
Je me contente ici de renvoyer à
l'article de~\citet*{itenberg-katzarkov-mikhalkin-zharkov2016}
où sont   construits des espaces de $(p,q)$-formes sur
les variétés tropicales.

\subsection{}
L'unimodalité et la log-concavité sont des thèmes prégnants
de la combinatoire énumérative.
L'article de~\cite{stanley1989}, complété par celui de~\cite{brenti1994},
en fournit une excellente introduction.

À propos des résultats de cet exposé, 
je renvoie aussi au bref survol \citep*{adiprasito-huh-katz2017}
et surtout à l'article d'exposition de~\citet*{baker2018}.

Je remercie enfin Karim Adiprasito,
Matt Baker, Michel Brion, 
Antoine Ducros, Javier Fresán, Olivier Guichard, June Huh, Ilia Itenberg
et Bernard Teissier 
pour leurs commentaires sur des premières versions de ce rapport.

\section{Éventails}

\subsection{}
Soit $(\mathscr P,\preceq)$ un ensemble ordonné, disons fini,
et soit $A$ un anneau commutatif. L'algèbre de convolution
$\mathscr A(\mathscr P;A)$ est le $A$-module
des fonctions à valeurs dans~$A$ sur l'ensemble des couples $(x,y)$
d'éléments de~$\mathscr P$ tels que $x\preceq y$,
muni du produit de convolution défini par 
\[ \phi* \psi (x,y) = \sum_{x\preceq z\preceq y} \phi(x,z) \psi(z,y). \]
Son élément unité~$\delta$ est l'indicatrice de Kronecker.
Un élément de $\mathscr A(\mathscr P;A)$ est inversible si et seulement 
s'il ne prend que des valeurs inversibles en les couples de la forme~$(x,x)$.
La fonction de Möbius de~$\mathscr P$, notée~$\mu$,
est l'inverse de la fonction constante~$\mathbf 1$ de valeur~$1$;
elle est caractérisée par les relations
\begin{gather}
\mu(x,x)=1  \\
\sum_{x\preceq z\preceq y} \mu(x,z) =0
\end{gather}
pour tout $x\in\mathscr P$ d'une part, 
et tout couple $(x,y)$ d'éléments de~$\mathscr P$ tels que $x\prec y$.
L'algèbre $\mathscr A(\mathscr P;A)$ agit à gauche sur le $A$-module
$\mathscr F(\mathscr P;A)$ des fonctions  de~$\mathscr P$ dans~$A$, 
par la formule:
\[ \phi * f(x) = \sum_{x\preceq y} \phi(x,y) f(y). \]
La \emph{formule d'inversion de Möbius} est alors,
pour deux éléments $f,g$ de~$\mathscr F(P;A)$ 
l'équivalence entre les relations $g=\mathbf 1*f$  et $f=\mu*g$;
autrement dit:
\[ g(x) = \sum_{x\preceq y} f(y) \quad\Leftrightarrow\quad
   f(y) = \sum_{y\preceq x} \mu(y,x) g(x) .
\] 

Lorsque, de plus, $\mathscr P$ est un treillis, on a la relation
(\emph{théorème de Weisner}):
\begin{equation}
\sum_{\sup(x,a)=\sup(\mathscr P)} \mu(\inf(\mathscr P),x) = 0
\end{equation}
pour tout $a\in\mathscr P$ distinct de $\inf(\mathscr P)$.
Et si, de plus, ce treillis est sous-modulaire, le signe 
de la fonction de Möbius est donné par :
\begin{equation}\label{eq.rk-rota}
(-1)^{\rk(y)-\rk(x)} \mu(x,y) \geq 0
\end{equation}
pour $x,y\in\mathscr P$ tels que $x\preceq y$.

\subsection{}
Le polynôme caractéristique d'un matroïde~$M$ s'exprime en termes
de son treillis des plats, par la relation :
\begin{equation}\label{eq.chiM}
 \chi_M(T) = \sum_{P\in\mathscr P_M} \mu(\emptyset,P) T^{\cork_M(P)}. 
\end{equation}
Cette relation permet aussi de supposer,
dans les questions relatives au polynôme caractéristique~$\chi_M$,
que le matroïde~$M$ est une géométrie combinatoire.
Jointe aux inégalités~\eqref{eq.rk-rota},
elle prouve enfin que les entiers~$w_k(M)$
introduits dans le corollaire~\ref{coro.welsh}
sont positifs ou nuls.

\subsection{}
Soit $M$ une géométrie combinatoire de rang~$\geq 1$ sur un ensemble~$E$;
posons $r=\rk_M(M)-1$ et $n=\Card(E)-1$.
On note $N\simeq \Z^n$ le quotient du $\Z$-module libre~$\Z^E$,
de base canonique $(e_i)_{i\in E}$
par le sous-module engendré par $\sum e_i$; pour $i\in E$,
on note encore  $e_i$ l'image dans~$N$ de l'élément correspondant de~$\Z^E$.
Pour toute partie~$I$ de~$E$, on pose $e_I=\sum_{i\in I} e_i$.

Dans tout ce texte, un \emph{drapeau} de plats de~$M$
sera une famille totalement ordonnée de plats de~$M$
distincts de~$\emptyset$ et~$\abs M$. 
Si $\mathscr D$ est un drapeau,
on note alors~$\sigma_{\mathscr D}$ le cône de~$N_\R$
engendré par les vecteurs $e_{P}$, pour $P\in\mathscr D$;
il est de dimension~$\Card(\mathscr D)$.
Notons~$\Sigma_M$ l'ensemble de ces cônes:
c'est l'\emph{éventail de Bergman} du matroïde~$M$
\citep{ardila-klivans2006}.

\begin{prop}
L'ensemble $\Sigma_M$ est un éventail unimodulaire de~$N_\R$,
purement de dimension~$r$.
\end{prop}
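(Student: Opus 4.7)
Mon plan consiste à établir successivement les quatre points : pureté de dimension, unimodularité de chaque cône, stabilité par faces, et compatibilité des intersections. Par la propriété caténaire des matroïdes, toute chaîne maximale de plats $\emptyset \subsetneq P_1 \subsetneq \dotsb \subsetneq P_r \subsetneq \abs M$ est de cardinal $\rk_M(\abs M) + 1$ (les rangs sautant de un en un), de sorte que tout drapeau maximal au sens ci-dessus est de cardinal exactement $r$ ; de plus, tout drapeau s'étend en un drapeau maximal. Dès qu'on aura vu que $\sigma_{\mathscr D}$ est de dimension $\Card(\mathscr D)$, la pureté de dimension $r$ s'ensuivra.

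Pour l'unimodularité, fixons un drapeau maximal $\mathscr D = (P_1 \subsetneq \dotsb \subsetneq P_r)$ et exhibons une $\Z$-base de $N$ contenant $e_{P_1}, \dotsc, e_{P_r}$. Les différences $P_i \setminus P_{i-1}$ (avec $P_0 := \emptyset$) étant non vides, choisissons $y_i \in P_i \setminus P_{i-1}$ deux à deux distincts, ainsi que $x_0 \in \abs M \setminus P_r$ (non vide, puisque $P_r$ est un plat propre). Dans la base canonique $(e_i)_{i \neq x_0}$ de $N$, remplaçons $e_{y_i}$ par $e_{P_i} = \sum_{j \in P_i} e_j$ pour chaque $i$. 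Comme $y_k \in P_i$ si et seulement si $k \leq i$ et comme $x_0 \notin P_r$, la matrice de passage --- les indices étant ordonnés en plaçant d'abord $y_1, \dotsc, y_r$ --- est bloc-triangulaire inférieure, ses deux blocs diagonaux étant unipotent de taille $r$ et identité respectivement. Son déterminant vaut donc $1$, de sorte que $(e_{P_1}, \dotsc, e_{P_r})$ se complète en une $\Z$-base de $N$. En particulier, $\sigma_{\mathscr D}$ est simplicial, de dimension $r$, et unimodulaire ; les cônes non maximaux, qui en sont des faces, sont également unimodulaires.

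La stabilité par faces est immédiate, puisqu'une sous-partie d'un drapeau de plats reste un drapeau de plats. Le point délicat est l'identité $\sigma_{\mathscr D_1} \cap \sigma_{\mathscr D_2} = \sigma_{\mathscr D_1 \cap \mathscr D_2}$, que je propose d'établir par une caractérisation intrinsèque des cônes. Pour $v \in N_\R$, tout relevé $\tilde v \in \R^E$ est défini à une constante additive près ; la famille emboîtée des ensembles de surniveau de $\tilde v$, privée de $\emptyset$ et de $E$, ne dépend donc que de $v$ --- notons-la $\mathscr F(v)$. En développant une combinaison positive $\sum_{P \in \mathscr D} a_P e_P$ et en lisant ses valeurs sur les morceaux successifs $P_j \setminus P_{j-1}$ de la partition de $E$ induite par $\mathscr D$, on vérifie que $v \in \sigma_{\mathscr D}$ si et seulement si $v$ admet un relevé constant sur chaque morceau et décroissant au sens large le long de la chaîne, c'est-à-dire si et seulement si $\mathscr F(v) \subset \mathscr D$. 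Par conséquent, $v \in \sigma_{\mathscr D_1} \cap \sigma_{\mathscr D_2}$ entraîne $\mathscr F(v) \subset \mathscr D_1 \cap \mathscr D_2$ --- intersection de deux drapeaux de plats, donc encore un drapeau de plats --- d'où $v \in \sigma_{\mathscr D_1 \cap \mathscr D_2}$, qui est bien une face commune aux deux cônes.

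La principale difficulté réside dans cette dernière étape. Une fois franchie, on aura au passage constaté que $\Sigma_M$ est un sous-éventail de l'éventail permutoédral sur $N_\R$ (indexé par les drapeaux arbitraires de parties de $E$), dont la structure d'éventail est classique. Le reste de la preuve n'utilise que la stabilité des plats par intersection et la propriété caténaire.
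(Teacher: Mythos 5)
Votre démonstration est correcte et suit essentiellement la voie standard, qui est aussi celle que l'exposé (lequel ne détaille pas la preuve de cette proposition et renvoie à Ardila--Klivans) esquisse plus loin pour les éventails généralisés $\Sigma_{M,\mathscr P}$ : le point clef y est précisément la formule d'intersection $\sigma_{\mathscr D_1}\cap\sigma_{\mathscr D_2}=\sigma_{\mathscr D_1\cap\mathscr D_2}$, que votre caractérisation de $\sigma_{\mathscr D}$ par les ensembles de surniveau d'un relevé (autrement dit la réalisation de $\Sigma_M$ comme sous-éventail de l'éventail des tresses, c'est-à-dire de l'éventail normal du permutoèdre) établit proprement. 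La réduction de la pureté à la propriété caténaire et l'argument d'unimodularité par une matrice de passage unipotente sont également corrects.
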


Que $\Sigma_M$ soit un \emph{éventail} signifie qu'il n'est pas vide,
que toute face d'un cône de~$\Sigma_M$ appartient à~$\Sigma_M$
et que l'intersection de deux cônes de $\Sigma_M$,
est une face commune de ces deux cônes;
qu'il soit \emph{unimodulaire} signifie que les cônes de~$\Sigma_M$
sont engendrés par une partie d'une base de~$N$;
qu'il soit \emph{purement de dimension~$r$} signifie que 
tout cône maximal de~$\Sigma_M$ est de dimension~$r$.

À tout éventail~$\Sigma$
est classiquement associée une \emph{$K$-variété torique} $X_\Sigma$,
aussi notée~$X(\Sigma)$,
obtenue en recollant les variétés 
affines $X_\sigma = \Spec( K[N^\vee \cap \sigma^\circ])$,
pour $\sigma$ parcourant~$\Sigma$,
où $\sigma^\circ$ désigne l'ensemble des formes linéaires
sur~$N_\R$ qui sont positives en tout point de~$\sigma$;
je renvoie par exemple à~\citep{fulton1993} pour plus de détails
sur cette théorie.
Noter que si $\tau$ est une face de~$\sigma$,
$X_\tau$ est un ouvert de~$X_\sigma$ pour la topologie de Zariski.

Soit $\sigma$ un cône de~$\Sigma$. Si $\sigma$ est engendré
par une partie d'une base de~$N$, la variété~$X_\sigma$ 
est un ouvert de~$\A^E$, isomorphe
à $\A^{\dim(\sigma)} \times \mathbf G_{\mathrm m}^{n-\dim(\sigma)}$.
Si l'éventail~$\Sigma$ est unimodulaire,
cette hypothèse est donc vérifiée pour tout cône,
de sorte que la variété torique~$X_\Sigma$ est lisse.

Nous noterons $X_M$ la variété torique lisse associée à l'éventail~$\Sigma_M$.

\begin{rema}
Soit $K$ un corps.
Supposons que le matroïde~$M$ soit associé à l'arrangement
d'hyperplans d'un sous-espace projectif~$V$ de~$\P_{n}$
de dimension~$r$ découpé par les hyperplans de coordonnées
de~$\P_{n}$ (sur le corps~$K$).
Le support~$\abs{\Sigma_M}$ de l'éventail~$\Sigma_M$,
réunion des cônes de~$\Sigma_M$,
s'interprète alors comme la \emph{tropicalisation}
de $V\cap\mathbf G_{\mathrm m}^n$. 

Il y a plusieurs façons de définir cette tropicalisation.

Munissons le corps~$K$ de la valeur absolue triviale
et considérons le tore $K$-analytique $(\gm^n)^\an$, 
au sens de~\citet{berkovich1990},
c'est-à-dire l'espace
des semi-normes multiplicatives sur la $K$-algèbre
$K[T_1^{\pm1},\dotsc,T_n^{\pm 1}]$  qui sont triviales sur~$K$.
Il est muni d'une application continue et propre de tropicalisation,
$\tau\colon (\gm^n)^\an\to\R^n$
qui applique une semi-norme $\abs{\cdot}$
sur $(\log(\abs{T_1}),\dotsc,\log(\abs{T_n}))$.
L'espace de Berkovich $(V\cap\gm^n)^\an$
de~$V\cap\gm^n$ est un sous-espace de~$(\gm^n)^\an$
et son image par~$\tau$ est égale à~$\abs{\Sigma_M}$.

Peut-être plus élémentairement \citep*[voir][]{einsiedler-kapranov-lind2006},
considérons une extension
algébriquement close~$L$ de~$K$ munie d'une valeur absolue
non archimédienne non triviale, mais triviale sur~$K$,
par exemple une clôture algébrique du
corps $K(\!(z)\!)$ des séries de Laurent.
Alors, $\abs{\Sigma_M}$ est l'\emph{adhérence} de l'image
de~$V(L)\cap (L^\times)^n$
par l'application  $(a_1,\dotsc,a_n)\mapsto (\log(\abs{a_1}),\dotsc,\log(\abs{a_n})$ de~$(L^\times)^n$ dans~$\R^n$.

On peut enfin prendre $K=\C$
et considérer, pour tout nombre réel~$\eps>1$,
l'application $\lambda_\eps\colon (\C^\times)^n\to\R^n$
donnée par $\lambda_\eps(z_1,\dotsc,z_n)=(\log_\eps(\abs{z_1}),\dotsc,\log_\eps(\abs{z_n}))$ (logarithme en base~$\eps$).
Lorsque $\eps$ tend vers~$+\infty$, $\lambda_\eps(V\cap (\C^\times)^n)$
converge vers~$\abs{\Sigma_M}$.
\end{rema}

\subsection{}
Soit $N$ un $\Z$-module libre de rang fini,~$n$,
et soit $\Sigma$ un éventail unimodulaire de~$N$;
notons $r$ la borne supérieure (dans~$\N$) 
des dimensions des cônes de~$\Sigma$.
Soit $K$ un corps ; considérons 
la $K$-variété torique $X_\Sigma$ associée à l'éventail~$\Sigma$,
et notons $A(X_\Sigma)$ son anneau de Chow.

On note $V_\Sigma$ l'ensemble des \emph{rayons} de~$\Sigma$,
c'est-à-dire des générateurs primitifs des cônes
de dimension~$1$ de~$\Sigma$. À tout~$v\in V_\sigma$
correspond un diviseur de Cartier irréductible~$D_v$ de~$X_\Sigma$.

Soit $S_\Sigma=\Z[(T_v)_{v\in V_\sigma}]$ 
l'anneau gradué des polynômes  à  coefficients
entiers et à indéterminées dans~$V_\sigma$.
Pour tout entier~$k$,
on note~$S^k_\Sigma$ sa composante homogène de degré~$k$.

Pour tout cône~$\sigma$ de~$\Sigma$, on 
pose $T_\sigma=\prod_{v\in\sigma\cap V_\Sigma} T_e$;
c'est un monôme de degré~$\dim(\sigma)$.
Pour tout entier~$k$, on définit un sous-module 
\[ Z^k(\Sigma)=\bigoplus_{\substack{\sigma\in\Sigma \\ \dim(\sigma)=k}}
    \Z T_\sigma \ ; \]
c'est un sous-module de $S^k_\Sigma$, nul pour $k>r$.
On pose aussi $Z(\Sigma)=\bigoplus_{k\in\N} Z^k(\Sigma)$.

Soit $I_\Sigma$ l'idéal homogène de~$S_\Sigma$ engendré par les monômes
qui n'appartiennent pas à~$Z(\Sigma)$
et $J_\Sigma$ l'idéal de~$S_\Sigma$ engendré par les polynômes linéaires
$ \sum_{v\in V_\sigma} \phi(v) T_v $,
pour $\phi\in N^\vee$. 

\begin{prop}
L'unique homomorphisme d'anneaux de~$S_\Sigma$ dans~$A(X_\Sigma)$
qui applique tout~$v\in V_\sigma$ sur la classe de~$D_v$
dans~$A^1(X_\Sigma)$ est surjectif.
Son noyau est l'idéal $I_\Sigma+J_\Sigma$.
\end{prop}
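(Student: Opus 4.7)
L'énoncé est la présentation classique de Jurkiewicz--Danilov du anneau de Chow d'une variété torique lisse. La démonstration se décompose en trois étapes : (i)~la bonne définition, (ii)~la surjectivité, (iii)~le calcul du noyau.

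\emph{Étape 1 : $I_\Sigma + J_\Sigma$ est contenu dans le noyau.} Pour les générateurs de~$I_\Sigma$, soit $v_1,\dotsc,v_k\in V_\Sigma$ tels que $\{v_1,\dotsc,v_k\}$ n'engendre pas un cône de~$\Sigma$. Par la correspondance orbites/cônes, l'intersection $D_{v_1}\cap\dotsb\cap D_{v_k}$ est vide dans~$X_\Sigma$, donc son produit d'intersection $[D_{v_1}]\dotsb [D_{v_k}]$ est nul dans~$A(X_\Sigma)$. Pour $J_\Sigma$, tout $\phi\in N^\vee$ définit un caractère $\chi^\phi$ du tore, qui est une fonction rationnelle globale sur~$X_\Sigma$, et sa division classique donne $\operatorname{div}(\chi^\phi)=\sum_{v\in V_\Sigma} \phi(v)\, D_v$. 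Cette somme est donc nulle dans~$A^1(X_\Sigma)$.

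\emph{Étape 2 : Surjectivité.} J'utiliserais la stratification de~$X_\Sigma$ par les orbites toriques $O_\sigma$, indexées par les cônes $\sigma\in\Sigma$. À~un cône unimodulaire $\sigma$ de générateurs primitifs $v_1,\dotsc,v_k$ correspond une adhérence d'orbite $V(\sigma)$ de codimension~$k$, et la lissité avec unimodularité entraîne que $D_{v_1},\dotsc,D_{v_k}$ se coupent transversalement le long de $V(\sigma)$, d'où $[V(\sigma)]=[D_{v_1}]\dotsb [D_{v_k}]=T_\sigma$ dans l'image. Comme chaque orbite est un tore (donc un ouvert d'un espace affine, à groupe de Chow trivial en degré positif), la suite exacte d'excision appliquée à la filtration de~$X_\Sigma$ par les fermés $X_\Sigma^{(\leq k)}=\bigcup_{\dim\sigma\geq k} V(\sigma)$ montre par récurrence que $A(X_\Sigma)$ est engendré, comme groupe, par les classes $[V(\sigma)]$, donc est atteint par notre homomorphisme.

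\emph{Étape 3 : Calcul du noyau.} C'est la partie délicate. Je procéderais par récurrence sur le nombre de cônes de~$\Sigma$. Soit $\sigma$ un cône maximal de~$\Sigma$ et $\Sigma'=\Sigma\setminus\{\sigma\}$ le sous-éventail obtenu en le retirant (ou plus correctement en ne gardant que ses faces strictes, selon le cas). La suite d'excision
\[ A(V(\sigma)) \longrightarrow A(X_\Sigma) \longrightarrow A(X_{\Sigma'}) \longrightarrow 0 \]
permet de comparer les présentations, les deux quotients $S_\Sigma/(I_\Sigma+J_\Sigma)$ et $S_{\Sigma'}/(I_{\Sigma'}+J_{\Sigma'})$ différant précisément par la relation monomiale $T_\sigma=\prod_{v\in\sigma\cap V_\Sigma}T_v$. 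Un point clé : pour $\sigma$ de dimension maximale, $V(\sigma)$ est un point si $\Sigma$ est complet, et plus généralement une sous-variété torique propre lisse, de sorte que son anneau de Chow est connu par récurrence sur la dimension. Le principal obstacle est la gestion simultanée des deux relations linéaires (l'idéal~$J_\Sigma$) et des relations monomiales (l'idéal~$I_\Sigma$) tout au long de la récurrence, et la vérification que la suite ci-dessus reste exacte à gauche après passage au quotient par ces idéaux — ce qui exige un argument de platitude ou de comptage de bases monomiales.

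Une alternative, peut-être plus transparente, serait d'exhiber une $\Z$-base explicite de $S_\Sigma/(I_\Sigma+J_\Sigma)$ indexée par les cônes de~$\Sigma$ (la «~base de Danilov~» formée de monômes sans carré $T_\sigma$ après réduction par les relations linéaires associées à un éclatement, ou une base de Stanley--Reisner adaptée), et de vérifier que l'image $[V(\sigma)]$ engendre librement $A(X_\Sigma)$ sur~$\Z$.
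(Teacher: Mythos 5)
Le texte de l'exposé ne démontre pas cette proposition : c'est la présentation classique de Danilov--Jurkiewicz de l'anneau de Chow d'une variété torique lisse, renvoyée à la littérature (cf. \citep{fulton1993} et la remarque qui suit, via \citep{billera1989}). Vos étapes~1 et~2 sont correctes et constituent l'argument standard : annulation des générateurs de $I_\Sigma$ (lus, comme il se doit, comme l'idéal de Stanley--Reisner) et de $J_\Sigma$ via $\operatorname{div}(\chi^\phi)=\sum_v\phi(v)D_v$, puis génération de $A_*(X_\Sigma)$ par les classes $[V(\sigma)]$ grâce à la filtration par les orbites, l'unimodularité assurant $[V(\sigma)]=\prod [D_{v_i}]$ par transversalité.

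L'étape~3, en revanche, reste à l'état de programme, et c'est pourtant là que réside tout le contenu de l'énoncé. Trois points précis. (i) La suite d'excision $A_*(V(\sigma))\to A_*(X_\Sigma)\to A_*(X_{\Sigma'})\to 0$ n'est qu'exacte à droite ; pour en déduire l'injectivité de $S_\Sigma/(I_\Sigma+J_\Sigma)\to A(X_\Sigma)$ par récurrence, il faut encore montrer que l'idéal engendré par $T_\sigma$ dans $S_\Sigma/(I_\Sigma+J_\Sigma)$ est réduit au groupe $\Z T_\sigma$ (c'est-à-dire que $T_\sigma T_v$ appartient aux relations pour tout rayon~$v$) et que $[V(\sigma)]$ est d'ordre infini dans $A(X_\Sigma)$ — deux faits non triviaux que vous ne vérifiez pas et qui reviennent précisément au « comptage de bases monomiales » que vous reportez. (ii) L'affirmation que $V(\sigma)$ est une sous-variété torique \emph{propre} pour $\sigma$ maximal est fausse dès que $\Sigma$ n'est pas complet : pour un cône maximal, $V(\sigma)$ est l'orbite $O_\sigma\simeq\mathbf G_{\mathrm m}^{\,n-\dim\sigma}$ elle-même, et c'est exactement la situation de l'éventail de Bergman, où $\dim\sigma=r<n$ ; la récurrence sur la dimension telle que vous l'énoncez ne s'amorce donc pas. (iii) La voie sûre est celle que vous mentionnez en alternative : soit exhiber une $\Z$-base monomiale explicite de $S_\Sigma/(I_\Sigma+J_\Sigma)$ et la comparer à la présentation de $A_*(X_\Sigma)$ par générateurs $[V(\sigma)]$ et relations $\sum_{\sigma\supset\tau}\langle u,n_{\sigma,\tau}\rangle[V(\sigma)]$ (Fulton--Sturmfels), soit invoquer directement cette présentation ; mais il faudrait la mener à bien pour que l'argument soit complet.
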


L'anneau gradué quotient $A(\Sigma)=S_\Sigma/(I_\Sigma+J_\Sigma)$
sera ainsi appelé 
l'\emph{anneau de Chow} de l'éventail~$\Sigma$.
Pour tout entier~$k$, on note $A^k(\Sigma)$ 
sa composante homogène de degré~$k$;
elle est engendrée par $Z^k(\Sigma)$.

Pour $k>\dim(\Sigma)$, on a $A^k(\Sigma)=0$.

\begin{rema}
On note $\PL(\Sigma)$ (resp. $\PP(\Sigma)$)
l'espace vectoriel (resp. la $\R$-algèbre) 
des fonctions de~$\abs\Sigma$ dans~$\R$
dont la restriction à tout cône de~$\Sigma$ est linéaire
(resp. polynomiale);  une telle fonction est continue.
On dit, par abus, que ce sont les fonctions
linéaires (resp. polynomiales)  par morceaux sur~$\abs\Sigma$.

Pour $v\in V_\Sigma$, il existe
une unique fonction $\phi_v\in\PL(\Sigma)$
telle que pour tout $w\in V_\Sigma$,
on a $\phi_v(w)=1$ si $w=v$, et $\phi_v(w)=0$ sinon.
Ces fonctions~$\phi_v$, parfois appelées \emph{fonctions de Courant},
forment une base de~$\PL(\Sigma)$.

D'après~\citep{billera1989},
l'unique homomorphisme de~$\R\otimes S_\Sigma$ dans~$\PP(\Sigma)$
qui applique $T_v$ sur~$\phi_v$, pour tout $v\in V_\Sigma$,
est surjectif; son noyau est engendré par
les monômes $T_{v_1}\dotsm T_{v_d}$, tels que $v_1,\dotsc,v_d$
n'engendrent pas un cône de~$\Sigma$. 
Par suite, $A(\Sigma)_\R$ est le quotient de $\PP(\Sigma)$
par l'idéal engendré par les restrictions à~$\abs\Sigma$
des formes linéaires sur~$N_\R$.
\end{rema}

\begin{lemm}
L'application de~$\PL(\Sigma)$ dans~$A^1(X_\Sigma)_\R$
qui, pour tout $v\in V_\Sigma$, applique~$\phi_v$
sur la classe de~$D_v$, est surjective. 
Son noyau est le sous-espace de~$\PL(\Sigma)$ engendré 
par les restrictions à~$\abs\Sigma$ des formes linéaires
sur~$N_\R$.
\end{lemm}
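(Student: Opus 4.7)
Le plan est de déduire ce lemme en combinant la proposition précédente, qui décrit $A(X_\Sigma)$ comme quotient $S_\Sigma/(I_\Sigma+J_\Sigma)$, et la remarque de Billera, qui identifie $\R\otimes S_\Sigma$ à~$\PP(\Sigma)$ via $T_v\mapsto\phi_v$. Il suffit de prendre la partie de degré~$1$ de ces deux descriptions et de vérifier leur compatibilité.

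Premièrement, je remarquerais que~$I_\Sigma$ n'a aucune composante de degré~$1$: tout~$v\in V_\Sigma$ engendre par définition un cône de dimension~$1$ de~$\Sigma$, donc $T_v\in Z^1(\Sigma)$. La proposition précédente donne ainsi une surjection $S^1_\Sigma\to A^1(X_\Sigma)$ de noyau $J_\Sigma\cap S^1_\Sigma$, engendré par les formes linéaires $\sum_{v\in V_\Sigma}\phi(v)T_v$ pour~$\phi\in N^\vee$. De même, le noyau du morphisme de Billera est engendré par des monômes de degré~$\geq 2$, si bien qu'en degré~$1$ il induit un isomorphisme $\R\otimes S^1_\Sigma\simeq\PL(\Sigma)$ envoyant~$T_v$ sur~$\phi_v$; de façon équivalente, les fonctions de Courant forment une base de~$\PL(\Sigma)$, ce qui est d'ailleurs affirmé dans la remarque.

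En composant l'inverse de cet isomorphisme avec la projection vers~$A^1(X_\Sigma)_\R$, on obtient la surjectivité cherchée, avec noyau égal à l'image de~$(J_\Sigma\cap S^1_\Sigma)_\R$ dans~$\PL(\Sigma)$. Il reste alors à identifier cette image au sous-espace engendré par les restrictions des formes linéaires. Pour~$\phi\in N^\vee$, son image est la fonction $\sum_{v\in V_\Sigma}\phi(v)\phi_v$; je calculerais sa valeur en un point $w\in\abs\Sigma$ contenu dans un cône $\sigma\in\Sigma$. L'unimodularité de~$\Sigma$ permet d'écrire de façon unique $w=\sum_{v\in\sigma\cap V_\Sigma}c_v v$ avec~$c_v\geq 0$; par définition, $\phi_v(w)=c_v$ si $v\in\sigma$ et~$\phi_v(w)=0$ sinon, de sorte que $\sum_v\phi(v)\phi_v(w)=\sum_{v\in\sigma\cap V_\Sigma}c_v\phi(v)=\phi(w)$. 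Ceci identifie $\sum_v\phi(v)\phi_v$ à la restriction $\phi|_{\abs\Sigma}$, et achève la preuve.

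Aucune étape ne me semble véritablement délicate: le lemme apparaît comme un corollaire formel de la proposition et de la remarque qui le précèdent, l'unique point demandant un petit calcul étant la vérification finale que les combinaisons $\sum_v\phi(v)\phi_v$ coïncident avec les restrictions des formes linéaires sur~$\abs\Sigma$, laquelle repose exclusivement sur l'unimodularité de~$\Sigma$ et la définition des fonctions de Courant.
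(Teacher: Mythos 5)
Votre démonstration est correcte et suit exactement la voie que l'exposé laisse implicite : le lemme, énoncé sans preuve, est le corollaire en degré~$1$ de la proposition décrivant $A(X_\Sigma)$ comme quotient de~$S_\Sigma$ par~$I_\Sigma+J_\Sigma$ (dont la composante de degré~$1$ se réduit à celle de~$J_\Sigma$, les générateurs de~$I_\Sigma$ étant de degré~$\geq 2$) et de la remarque de Billera identifiant $T_v$ à~$\phi_v$. Le seul point demandant un calcul, à savoir que $\sum_v\phi(v)\phi_v$ coïncide avec la restriction de~$\phi$ à~$\abs\Sigma$ grâce à l'unimodularité (ou simplement au caractère simplicial) des cônes, est traité correctement ; rien à redire.
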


À une fonction linéaire par morceaux~$\phi\in\PL(\Sigma)$, on 
associe le $\R$-diviseur (invariant par l'action du tore) 
$\div(\phi)=\sum_{v\in V_\Sigma} \phi(v) D_v$.
Ce diviseur est effectif si $\phi$ est positive sur~$\abs\Sigma$.

Plus généralement,
soit $\sigma$ un cône de~$\Sigma$
et soit $V_\sigma\subset X_\sigma$ l'adhérence
de l'orbite du tore qui lui correspond
(on a $\dim(V_\sigma)=\codim(\sigma)$).
On dit que $\phi$ est \emph{convexe} en~$\sigma$
si la classe du diviseur $\div(\phi)|_{V_\sigma}$
sur~$V_\sigma$ est effective.
Cela revient à dire qu'il existe une forme linéaire~$m$
sur~$N_\R$ telle que $\phi=m$  sur~$\sigma$,
et telle que $\phi\geq m$ 
sur l'\emph{étoile} de~$\sigma$,
c'est-à-dire sur le sous-éventail $\star_\Sigma(\sigma)$
de~$\Sigma$ formé des cônes de~$\Sigma$ contenant une face de~$\sigma$.
On dit que $\phi$ est strictement convexe en~$\sigma$
s'il existe une forme linéaire~$m$ sur~$N_\R$
telle que $\phi=m$ sur~$\sigma$
et telle que $\phi(v)>m(v)$ pour tout rayon~$v$ de l'étoile de~$\sigma$
qui n'appartient pas à~$\sigma$.

L'ensemble des fonctions~$\phi\in\PL(\Sigma)$ qui sont convexes
(resp. strictement convexes) 
en tout cône~$\sigma\in\Sigma$ est 
un cône (resp. un cône ouvert) de~$\PL(\Sigma)$;
on l'appelle le cône \emph{nef} (resp. le cône \emph{ample})
de~$\PL(\Sigma)$ et on le note $\mathscr N_\Sigma$
(resp. $\mathscr K_\Sigma$).
Ces cônes contiennent l'image du dual de~$N_\R$,
on désigne des mêmes lettres leurs images dans~$A^1(X_\Sigma)_\R$.

Si $\mathscr K_\Sigma$ n'est pas vide, 
c'est l'intérieur de $\mathscr N_\Sigma$, lequel est son adhérence.
Cela se produit en particulier lorsque~$\Sigma$ est l'éventail
associé à un matroïde ; en effet, $\Sigma$
est alors un sous-éventail de l'éventail normal d'un polytope.

\subsection{}
Supposons que $\Sigma$ soit l'éventail~$\Sigma_M$
 associé au matroïde~$M$. On notera alors $A(M)$, $S_M$, etc.
les objets $A(\Sigma_M)$, $S_{\Sigma_M}$, etc.

Dans ce cas, les rayons de~$\Sigma_M$
sont les vecteurs~$e_P$, où $P$ parcourt l'ensemble~$\mathscr P^*_M$ des plats de~$M$ tels que $P\neq\emptyset$ et $P\neq \abs M$.
On a donc $S_M=\Z[(T_P)_{P\in\mathscr P^*_M}]$.
L'idéal~$I_M$ est engendré par les monômes quadratiques
$T_P T_Q$, où $(P,Q)$ parcourt l'ensemble des couples d'éléments
incomparables de~$\mathscr P^*_M$,
tandis que l'idéal~$J_M$ est engendré par les polynômes linéaires
\[ \sum_{P\ni i} T_P - \sum_{P\ni j} T_P, \]
où $(i,j)$ parcourt l'ensemble des couples d'éléments de~$\abs M$.

On a $A^k(M)=0$ pour $k>r$.

On définit deux éléments $\alpha_M$ et $\beta_M$ de $A^1(M)$ par 
\[  \alpha_M = \sum_{P\ni i} T_P, \qquad \beta_M =\sum_{P\not\ni i} T_P, \]
où $i$ est un élément arbitraire de~$\abs M$ (ils n'en dépendent pas).
Si $\mathscr D$ est un drapeau de plats donné,
on voit, 
en choisissant~$i$ hors de $\sup(\mathscr D)$, resp. dans $\inf(\mathscr D)$,
que leurs images dans $A^1(M)_\R$ 
appartiennent au cône nef~$\mathscr N_M$.

\begin{prop}\phantomsection\label{prop.deg-muk}
\begin{enumerate}\def\theenumi{\alph{enumi}}\def\labelenumi{\theenumi)}
\item
Il existe un unique homomorphisme de groupes
\[ \deg\colon A^r(M)\to\Z \]
tel que $\deg(T_{P_1}\dotsm T_{P_r})=1$ pour toute suite
$(P_1,\dotsc,P_r)$ de plats de~$M$ vérifiant
$\emptyset \subsetneq P_1\subsetneq\dotsb\subsetneq P_r\subsetneq \abs M$.
C'est un isomorphisme.
De plus, $\deg(\alpha_M^r)=1$.

\item
Pour tout entier $k$ tel que $0\leq k\leq r$, on a
\[ \mu^k(M) = \deg(\alpha_M^{r-k}\beta_M^k). \]
\end{enumerate}
\end{prop}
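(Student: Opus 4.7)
The plan is to prove (a) first, so that we have the degree map in hand, and then expand the products $\alpha_M^{r-k}\beta_M^k$ combinatorially to obtain (b). For (a), the generation of $A^r(M)$ by flag monomials $T_{\mathscr D} = T_{P_1}\dotsb T_{P_r}$, indexed by maximal flags $\mathscr D = (\emptyset \subsetneq P_1 \subsetneq \dotsb \subsetneq P_r \subsetneq \abs M)$, is immediate since $I_M$ kills any degree-$r$ product whose factors do not form a chain. The substantive points are that (i) any two flag monomials define the same class in $A^r(M)$, so that the degree map is well defined, and (ii) this common class is non-torsion, so that $A^r(M) \simeq \Z$. For (i), I would show that any two maximal flags are connected by a sequence of elementary moves replacing a single intermediate plat $P_j$ by another $P_j'$ in the same interval between $P_{j-1}$ and $P_{j+1}$, each such move being controlled by a relation in $J_M$: choosing $i \in P_j \setminus P_j'$ and $i' \in P_j' \setminus P_j$, the relation $\sum_{P \ni i} T_P = \sum_{P \ni i'} T_P$ multiplied by $\prod_{\ell \neq j} T_{P_\ell}$ collapses, after reduction modulo $I_M$, to $T_{\mathscr D} - T_{\mathscr D'}$. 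This is the combinatorial incarnation of the tropical balancing of the Bergman fan. Point (ii) then follows once one exhibits a non-zero image in $\Z$, which is provided by the identity $\deg(\alpha_M^r) = 1$ proved next.

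To establish $\deg(\alpha_M^r) = 1$, I would fix $i \in \abs M$ and expand $\alpha_M^r = \bigl(\sum_{P \ni i} T_P\bigr)^r$. After reduction modulo $I_M$, only products indexed by chains of plats all containing $i$ survive; up to reordering of the factors, these correspond to the maximal flags of $\mathscr P^*_M$ passing through $\langle i \rangle$. A multinomial bookkeeping combined with (i) shows that the whole expansion reduces to a single class $T_{\mathscr D}$, whence $\deg(\alpha_M^r) = 1$.

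For (b), the strategy is to expand $\alpha_M^{r-k}\beta_M^k$ using a fixed point $i \in \abs M$, so that the first factor runs over plats containing $i$ and the second over plats not containing $i$. After reduction modulo $I_M$, the surviving monomials correspond to chains $Q_1 \subsetneq \dotsb \subsetneq Q_k \subsetneq P_1 \subsetneq \dotsb \subsetneq P_{r-k}$ with the $Q_\ell$ avoiding $i$ and the $P_j$ containing $i$. Grouping by the top plat $Q = Q_k$, which has rank $k$ and avoids $i$, applying (a) to the contraction $M/Q$ (whose rank is $r+1-k$) to sum up the chain of $P$'s above $Q$, and finally applying a Möbius inversion in the lattice of plats of $M$ below $Q$ to handle the sub-chain of $Q_\ell$'s, the identity reduces via formula \eqref{eq.chiM} and the relation $\chi_M(T) = (T-1)\overline{\chi_M}(T)$ to the desired $\mu^k(M)$. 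The main obstacle will be the careful bookkeeping in this expansion — in particular, tracking signs and verifying that the alternating sum of Möbius values assembles into $\overline{\chi_M}$ — but the formula is already transparent in the representable case as a consequence of Theorem~\ref{theo.hk}, since $\alpha_M$ and $\beta_M$ are then the pullbacks to the wonderful compactification $\tilde V$ of the two hyperplane classes on $\P_n \times \P_n$.
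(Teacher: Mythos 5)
The exposé states this proposition without proof (it defers to Adiprasito--Huh--Katz), so there is no in-paper argument to compare with; what follows assesses your plan on its own terms. The sound part is the reduction of a) to two facts: $A^r(M)$ is generated by the flag monomials, and any two of them are equal. Your elementary move is correct, and for the right reason: with $i\in P_j\setminus P_j'$ and $i'\in P_j'\setminus P_j$, the points $i$ and $i'$ belong to exactly the same members of the punctured flag, so the non-squarefree terms $T_{P_\ell}^2\prod_{m\neq j,\ell}T_{P_m}$ cancel between the two sums, and each of $i$, $i'$ lies in exactly one rank-$j$ flat of the interval $[P_{j-1},P_{j+1}]$ (the connectedness of the graph of maximal flags under such moves is true but needs a word, e.g.\ shellability of the order complex of a geometric lattice). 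This shows that $A^r(M)$ is cyclic, generated by the common class $\xi$. The genuine gap is the passage from cyclic to $\simeq\Z$: certifying that $\xi$ is non-torsion by ``$\deg(\alpha_M^r)=1$'' is circular, since that identity only asserts $\alpha_M^r=\xi$ inside the ring and produces no non-zero homomorphism $A^r(M)\to\Z$. Beware moreover that the naive functional sending flag monomials to $1$ and all other monomials to $0$ does \emph{not} descend to $A^r(M)$: for the uniform matroid of rank~$3$ on $\{1,2,3,4\}$ one finds $T_{\{1\}}T_{\{1,2\}}+T_{\{1,2\}}^2\in I_M+J_M$, on which this functional takes the value~$1$. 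An external input is required here --- the monomial basis of Feichtner--Yuzvinsky, the balancing condition of the Bergman fan (identifying $A^r(M)^\vee$ with its Minkowski weights), or the flip induction culminating in the corollary to Théorème~\ref{theo.dec}, which reduces to $\P_r$.

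The same oversight invalidates your expansions with a single fixed base point $i$. Modulo $I_M$ it is false that only products of \emph{distinct} flats forming a chain survive: a monomial with a repeated variable whose support is a chain is not in the Stanley--Reisner ideal, and, as the example above shows, $T_P^2$ is a non-zero (negative) combination of flag classes. The squarefree part of $\bigl(\sum_{P\ni i}T_P\bigr)^{r-k}\bigl(\sum_{P\not\ni i}T_P\bigr)^k$ equals $(r-k)!\,k!$ times the number of maximal flags whose $k$ bottom members avoid~$i$, which is not $\mu^k(M)$; the square terms you discard carry essential cancellations. The standard repair is to multiply one factor at a time with an \emph{adaptively} chosen point: for a factor $\alpha_M$ take $i$ outside the top flat of the chain already built (exactly one new term then survives and no squares appear), for a factor $\beta_M$ take $i$ inside its bottom flat. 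One then finds that $\deg(\alpha_M^{r-k}\beta_M^k)$ equals the number of drapeaux initiaux descendants of length~$k$, which the Weisner theorem identifies with $\mu^k(M)$ --- precisely the count the exposé invokes to prove $\mu^k(M)>0$. Your closing remark that b) follows from the Théorème~\ref{theo.hk} is correct but covers only the representable case, so it cannot substitute for this computation.
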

La  deuxième partie de la proposition est
la généralisation, dans le contexte combinatoire, 
du théorème~\ref{theo.hk}.

\subsection{}
Soit $A$ une $\Z$-algèbre commutative unifère, graduée, artinienne, 
et soit $r$ un entier; on suppose que $A^k=0$ pour $k<0$ ou $k>r$
et on se donne un homomorphisme $\deg\colon A^r\to\Z$.

On dit que $(A,\deg)$ vérifie la \emph{dualité de Poincaré}
si pour tout entier~$k$ tel que $0\leq k\leq r$,
l'application 
$a\mapsto (b\mapsto \deg(ab))$ est un isomorphisme de~$A^k$
sur $(A^{r-k})^\vee$.

Soit $\ell$ un élément de $A^1_\R$ et soit $k$ un entier
tel que $k\leq r/2$.
L'application de Lefschetz associée à~$\ell$
est l'application  linéaire $\lambda^k\colon a\mapsto \ell^{r-2k} a$
de $A^k_\R$ dans~$A^{r-k}_\R$.
On définit aussi une forme bilinéaire symétrique $Q^k_\ell$ sur~$A^k_\R$
par
\[ Q^k_\ell (a,b)  = (-1)^k \deg( a\, \ell^{r-2k}b). \]
On note $P_\ell^k$ le sous-espace de~$A^k_\R$
formé des~$a\in A^k_\R$ tels que $\ell^{r+1-2k}a=0$.
Si $k>r/2$, on pose $P_\ell^k=0$.

On dit que $(A_\R,\ell)$ vérifie le théorème de Lefschetz difficile 
si $\lambda^k$ est un isomorphisme pour tout entier~$k$
tel que $0\leq k\leq r/2$.
Alors, pour tout entier~$k$ tel que $0\leq k\leq r/2$,
l'espace~$A^k_\R$ admet la \emph{décomposition de Lefschetz}:
\[ A^k_\R = P^k_\ell \oplus \ell P^{k-1}_\ell \oplus \dotsb \oplus \ell^k P_\ell^0\ ;\]
c'est une décomposition orthogonale pour la forme~$Q^k_\ell$.
Pour tout entier~$k$ tel que $r/2<k\leq r$, 
on obtient une décomposition similaire
en écrivant $A^k_\R=\ell^{2k-r} A^{r-k}_\R$:
\[ A^k_\R = \ell^{2k-r} P^{r-k}_\ell \oplus \ell^{2k-r-1} P^{r-k-1}_\ell \oplus \dotsb \oplus \ell^k P_\ell^0.\]

On dit que $(A_\R,\ell)$ vérifie les \emph{relations de Hodge--Riemann}
si la restriction à~$P^k_\ell$
de la forme quadratique associée à~$Q^k_\ell$ est définie positive
pour tout entier~$k$ tel que $0\leq k\leq r/2$.

\begin{exem}
La terminologie provient bien sûr des propriétés de l'algèbre
de cohomologie d'une variété complexe compacte (connexe) kählérienne.
Soit en effet $V$ une telle variété, soit $n$
sa dimension et soit $H(V)$
l'algèbre réelle graduée, gr-commutative, de cohomologie de De Rham.
On a $H^k(V)=0$ si $k>2n$.
La décomposition de Hodge munit $H(V)_\C$ d'une
bigraduation canonique $H^k(V)_\C=\smash{\bigoplus\limits_{p+q=k}} H^{p,q}(V)$;
on a $H^{p,q}(V)=0$ si $p$ ou $q$ n'appartient pas à l'intervalle~$[0,n]$.

On dispose d'un isomorphisme canonique $\int\colon H^{2n}(V)\to \R$.
Pour tout entier~$k$,
l'homomorphisme $a \mapsto (b\mapsto \int a\wedge b)$
induit un isomorphisme de $H^{k}(V)$ sur $H^{2n-k}(V)^\vee$
(dualité de Poincaré)
et de $H^{p,q}(V)$ sur $H^{n-p,n-q}(V)^\vee$.

Soit $\ell$ la classe dans $H^{2}(V)$ d'une forme de Kähler sur~$V$;
elle appartient à $H^{1,1}(V)$.
Alors, pour tout entier~$k$ tel que $k\leq n$,
l'application $a\mapsto \ell^k \wedge a$
de $H^{k}(V)$ dans $H^{k+2}(V)$  est injective
(théorème de Lefschetz difficile). On note alors $P^k(V)$
le sous-espace primitif de~$H^k(V)$,
noyau de $a\mapsto\ell^{k+1}\wedge a$.

La forme bilinéaire~$Q^k$ sur~$H^k(V)$ 
définie par $Q^k(a,b)=\int \ell^{n-k} \wedge a\wedge b$ 
est symétrique si $k$ est pair, alternée si $k$ est impair,
de sorte que la forme bilinéaire~$R^k$ sur $H^k(V)_\C$
définie par $R^k(a,b)=i^k Q^k(a,\overline b)$ est hermitienne
(forme de Riemann).
Soit $(p,q)$ un couple d'entiers tels que $p+q=k$.
La restriction à $P^k(V)_\C\cap H^{p,q}(V)$
de cette forme hermitienne est définie,
de signe $(-1)^{k(k-1)/2+q}$ (\emph{relations bilinéaires de Hodge--Riemann}).
Dans le cas particulier où~\mbox{$p=q$,} 
l'entier $k=2p$ est pair, le coefficient~$i^{k}$ 
qui intervient dans la définition de~$R^k$ est égal à~$(-1)^p$,
et sur $P^{2p}(V)_\C\cap H^{p,p}(V)$, la forme de Riemann
est définie positive.

Supposons de plus que $V$ soit projective
et notons $C(V)$ l'anneau des classes de cycles pour l'équivalence
homologique. L'homomorphisme
de classe de cycles $C(V)\to H(V)$  est alors injectif,
de sorte que $C(V)$ vérifie les conditions du paragraphe précédent.
Noter que lorsque $V$ est une variété torique projective lisse,
l'équivalence homologique coïncide avec l'équivalence rationnelle
et l'homomorphisme de classe de cycles est un isomorphisme.
\end{exem}

\begin{theo}\label{theo.hl}
Soit $M$ une géométrie combinatoire de rang~$>0$ et soit $r=\rk(M)-1$.
Soit $\ell\in \mathscr K_M$ une classe ample de~$\PL(M)$.
\begin{enumerate}\def\labelenumi{\theenumi)}
\item Le couple $(A(M),\deg)$ vérifie la dualité de Poincaré;
\item Le couple $(A(M)_\R,\ell)$ vérifie le théorème de Lefschetz difficile;
\item Le couple $(A(M)_\R,\ell)$ vérifie les relations de Hodge--Riemann.
\end{enumerate}
\end{theo}
C'est en quelque sorte le théorème principal de l'article
d'\citet*{adiprasito-huh-katz2015}. Nous donnerons
des indications de sa preuve dans la section suivante.
Voyons tout de suite comment il entraîne le théorème~\ref{theo.ahk}.

On commence par en déduire le corollaire suivant,
analogue aux inégalités de Khovanskii-Teissier.

\begin{coro}\label{coro.kt}
Soit $\alpha$ et $\beta$ des classes de~$A^1(M)_\R$.
Si $\alpha$ est nef, alors
\[ \deg(\alpha^{r-2}\beta^{2}) \deg(\alpha^r)
\leq \deg(\alpha^{r-1} \beta)^2. \]
\end{coro}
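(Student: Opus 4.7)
Here is how I would prove Corollary~\ref{coro.kt}.

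The plan is to obtain the inequality first under the stronger assumption that $\alpha$ is ample (i.e.\ $\alpha\in\mathscr K_M$) by applying the Hodge--Riemann relations of Theorem~\ref{theo.hl} in degree~$k=1$ with $\ell=\alpha$, and then to extend the result to every nef class by density. Recall from the text that, when $\mathscr K_M\neq\emptyset$, the nef cone $\mathscr N_M$ is the closure of $\mathscr K_M$; since $\deg(\alpha^{r-2}\beta^2)\deg(\alpha^r)-\deg(\alpha^{r-1}\beta)^2$ is a polynomial function of~$\alpha$, continuity will finish the argument once the ample case is settled.

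So assume $\alpha\in\mathscr K_M$. The Hodge--Riemann relations applied with $k=0$ to the generator $1\in A^0(M)_\R$ give $\deg(\alpha^r)=Q^0_\alpha(1,1)>0$, so one can define the scalar $c=\deg(\alpha^{r-1}\beta)/\deg(\alpha^r)$ and decompose $\beta=c\alpha+\gamma$ with $\gamma\in A^1(M)_\R$. By construction $\deg(\alpha^{r-1}\gamma)=\deg(\alpha^{r-1}\beta)-c\deg(\alpha^r)=0$. Since Poincaré duality (part~1 of Theorem~\ref{theo.hl}) identifies $A^r(M)_\R$ with $\R$ via $\deg$, the equality $\deg(\alpha^{r-1}\gamma\cdot a)=0$ for all $a\in A^0(M)_\R=\R$ forces $\alpha^{r-1}\gamma=0$ in $A^r(M)_\R$; hence $\gamma$ belongs to the primitive subspace $P^1_\alpha$ associated with $\ell=\alpha$ in the Lefschetz decomposition of $A^1(M)_\R=\R\alpha\oplus P^1_\alpha$.

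Expanding $\beta^2=(c\alpha+\gamma)^2$ in $\deg(\alpha^{r-2}\beta^2)$, the cross term vanishes because $\deg(\alpha^{r-1}\gamma\cdot\alpha^0)=0$, and the remaining terms give
\[
\deg(\alpha^{r-2}\beta^2)=c^2\deg(\alpha^r)+\deg(\alpha^{r-2}\gamma^2).
\]
By the Hodge--Riemann relations at $k=1$, the quadratic form $Q^1_\alpha(a,a)=-\deg(\alpha^{r-2}a^2)$ is positive definite on $P^1_\alpha$; hence $\deg(\alpha^{r-2}\gamma^2)\leq 0$. Substituting the value of~$c$ yields
\[
\deg(\alpha^{r-2}\beta^2)\leq c^2\deg(\alpha^r)=\frac{\deg(\alpha^{r-1}\beta)^2}{\deg(\alpha^r)},
\]
which rearranges to the claimed inequality. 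Passing from ample to nef~$\alpha$ by continuity concludes the proof.

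The mechanism is entirely formal once Theorem~\ref{theo.hl} is available, and I do not anticipate any genuine obstacle here: the only points that demand a brief check are the positivity $\deg(\alpha^r)>0$ (which comes from Hodge--Riemann in degree~$0$) and the identification of~$\gamma$ as a primitive class (which uses Poincaré duality). The real work lies not in this corollary but in establishing Theorem~\ref{theo.hl} itself.
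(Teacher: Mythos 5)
Your proof is correct and follows essentially the same route as the paper: reduce to the ample case by continuity, then exploit the Lefschetz decomposition $A^1(M)_\R=P^1_\alpha\oplus\langle\alpha\rangle$ together with the Hodge--Riemann relations in degree~$1$. The paper merely phrases the final step as the negativity of the discriminant of $Q^1_\alpha$ restricted to $\langle\alpha,\beta\rangle$ (signature $(1,1)$), which is the same computation as your explicit decomposition $\beta=c\alpha+\gamma$.
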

\begin{proof}
Par passage à la limite, il suffit de traiter le cas où $\alpha$
est ample.
La décomposition de Lefschetz $A^1(M)_\R=
P_\alpha^1(M) \oplus \langle \alpha \rangle$
est orthogonale pour la forme
$Q_\alpha^1$ définie par $Q_\alpha^1(x,y)=-\deg(x \alpha^{r-2} y)$,
laquelle est définie positive
sur~$P_\alpha^1(M)$ et définie négative sur~$\langle\alpha\rangle$.
L'inégalité à vérifier est évidente si $\beta$
est proportionnelle à~$\alpha$.
Sinon, le sous-espace $\langle\alpha,\beta\rangle$
est de dimension~$2$ et la restriction de la forme~$Q_\alpha^1$ 
y est de signature $(1,1)$. Son discriminant est donc négatif,
d'où le corollaire.
\end{proof}

\subsection{Démonstration du théorème~\ref{theo.ahk}}
On prouve d'abord que pour tout $k\in\{0,\dotsc,r\}$,
l'entier~$\mu^k(M)$ est strictement positif,
par exemple en déduisant du théorème de Weisner
qu'il est égal au nombre de « drapeaux initiaux descendants »
de longueur~$k$, c'est-à-dire de familles
$(P_1,\dotsc,P_k)$ de plats de~$M$ 
tels que $P_1\subset\dotsb\subset P_k$,
$\rk_M(P_j)=j$ pour tout~$j$,
et $\inf(P_1)>\inf(P_2)>\dotsb>\inf(P_k)>0$.
(Noter que ces conditions imposent $P_1\neq\emptyset$ et $P_k\neq\abs M$.)

D'après la proposition~\ref{prop.deg-muk},
on a $\mu^k(M)=\deg(\alpha_M^{r-k}\beta_M^k)$.
Ainsi, lorsque $k=r-1$, 
l'inégalité~(ii) du théorème~\ref{theo.ahk}
n'est autre que celle du corollaire~\ref{coro.kt},
appliquée à $\alpha=\alpha_M$ et $\beta=\beta_M$.

Sinon, on remplace~$M$ par la géométrie
combinatoire associée au matroïde tronqué $\tau(M)$
dont le treillis des plats est l'ensemble des plats de~$M$
dont le rang n'appartient pas à~$[k+1,r]$.
Son rang est égal à~$k+2$ et l'on 
a $\mu^j(M)=\mu^j(\tau(M))$ pour tout entier~$j$ tel que $j\leq k+1$.
L'inégalité voulue se déduit donc du cas déjà traité.

\section{Filtres}

\subsection{}
La démonstration du théorème~\ref{theo.hl} est combinatoire
et les paragraphes qui suivent ne font guère plus qu'en
décrire le cheminement; 
je renvoie à~\citep*[\S6--8]{adiprasito-huh-katz2015} pour les détails.

Cette démonstration consiste à partir
de l'éventail de l'espace projectif,
pour lequel la conclusion du théorème est évidente,
et à le modifier progressivement jusqu'à l'éventail~$\Sigma_M$,
de sorte qu'à chaque étape la conclusion du théorème reste vraie.
Ces modifications exigent d'introduire des éventails un peu plus compliqués.

On dira ainsi qu'un éventail~$\Sigma$ vérifie la dualité de Poincaré
en dimension~$r$ 
s'il existe un isomorphisme $\deg\colon A^r(\Sigma)\to\Z$
tel que l'anneau de Chow $(A(\Sigma),\deg)$ la vérifie.
On dira alors que $\Sigma$ vérifie le théorème de Lefschetz
difficile, resp. les relations de Hodge--Riemann, 
si $(A(\Sigma)_\R,\ell)$ les vérifie pour toute classe
ample $\ell\in\mathscr K_\Sigma$.

\subsection{}
Soit $M$ un matroïde sans boucle et  soit $\mathscr P_M$
le treillis de ses plats. On suppose que $M$ est de rang~$>0$
et on pose $r=\rk(M)-1$.

On note $\inf(\mathscr D)$ l'intersection, dans~$\abs M$,
des éléments d'un drapeau $\mathscr D$ de plats de~$M$; 
si $\mathscr D=\{P_1,\dotsc,P_d\}$,
où $\emptyset\subsetneq P_1\subsetneq\cdots\subsetneq P_d \subsetneq \abs M$,
on a donc $\inf(\mathscr D)=\abs M$ lorsque $d=0$, et $\inf(\mathscr D)=P_1$ sinon.

On dit que $I$ et~$\mathscr D$ sont compatibles, et on note $I<\mathscr D$,
si $I$ est une partie stricte de~$\inf(\mathscr D)$.
On dit qu'ils sont compatibles vis-à-vis de~$M$, et on note $I<_M\mathscr D$,
si $\Card(I)<\rk_M(\inf(\mathscr D))$; cela implique qu'ils sont compatibles,
car le rang d'un plat est majoré par son cardinal.

Soit $\mathscr D$ un tel drapeau et soit $I$ une partie de~$\inf(\mathscr D)$.
On note $\sigma_{I,\mathscr D}$ le cône engendré par
les vecteurs~$e_i$, pour $i\in I$, et les vecteurs~$e_P$, pour $P\in\mathscr D$.
Si $I$ et $\mathscr D$ sont compatibles, c'est un cône
de dimension~$\Card(I)+\Card(\mathscr D)$.

\subsection{}
On appelle \emph{filtre}\footnote{%
La terminologie est un peu trompeuse car on n'impose
pas l'hypothèse de stabilité par~$\wedge$;
dans le cas du treillis des parties d'un ensemble,
une telle partie n'est pas nécessairement un filtre au sens usuel.} 
de plats sur~$M$
une partie non vide~$\mathscr P$ de~$\mathscr P_M$,
ne contenant pas~$\emptyset$, 
qui contient tout plat de~$\abs M$ contenant un élément de~$\mathscr P$.

Soit $\mathscr P$ un filtre de plats de~$M$.
L'\emph{éventail de Bergman} associé à $(M,\mathscr P)$
est l'ensemble~$\Sigma_{M,\mathscr P}$
des cônes~$\sigma_{I,\mathscr D}$,
où $\mathscr D$ parcourt l'ensemble des drapeaux de
plats de~$M$ appartenant à~$\mathscr P$ et 
$I$ parcourt l'ensemble des parties de~$\abs M$ compatibles avec~$\mathscr D$
et telles que $\langle I\rangle \not\in\mathscr P$.
L'\emph{éventail de Bergman réduit} $\widetilde\Sigma_{M,\mathscr P}$
est l'ensemble de ces cônes, où l'on exige en outre
que $I$ et $\mathscr D$ soient compatibles vis-à-vis de~$M$.

Si $(I_1,\mathscr D_1)$ et $(I_2,\mathscr D_2)$ sont
des couples tels que $I_1<\mathscr D_1$ et $I_2<\mathscr D_2$,
alors $I_1\cap I_2 <\mathscr D_1\cap\mathscr D_2$,
et de même  pour la relation~$<_M$.
De plus, on a $\sigma_{I_1,\mathscr D_1}\cap \sigma_{I_2,\mathscr D_2}
= \sigma_{I_1\cap I_2,\mathscr D_1\cap\mathscr D_2}$.
Cela entraîne que $\Sigma_{M,\mathscr P}$ et $\widetilde \Sigma_{M,\mathscr P}$
sont effectivement des éventails de~$N_\R$.

Lorsque $\mathscr P=\mathscr P_M\setminus\{\emptyset\}$,
la condition $\langle I\rangle \not\in\mathscr P$ équivaut à $I=\emptyset$,
et la condition $I\subsetneq \inf(\mathscr D)$ est vérifiée
car $\inf(\mathscr D)$ n'est pas vide.
Dans ce cas, les deux éventails $\Sigma_{M,\mathscr P}$ 
et $\widetilde\Sigma_{M,\mathscr P}$ coïncident avec l'éventail~$\Sigma_M$.

Lorsque $M$ est le matroïde booléen sur~$\abs M$
(c'est-à-dire dont toute partie est plate), l'éventail $\Sigma_{M,\mathscr P}$
est l'éventail d'un polytope qui est obtenu par subdivisions étoilées
successives à partir d'un simplexe
\citep*[prop.~2.4]{adiprasito-huh-katz2015}.

\begin{lemm}\def\labelenumi{\theenumi)}
\begin{enumerate}
\item 
L'éventail $\Sigma_{M,\mathscr P}$ est un sous-éventail 
de l'éventail normal d'un polytope; en particulier, son cône
ample n'est pas vide.
\item
L'éventail $\widetilde\Sigma_{M,\mathscr P}$ est purement de dimension~$r$.
\end{enumerate}
\end{lemm}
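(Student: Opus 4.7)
Pour~(1), je chercherai à exhiber explicitement un polytope $\Pi$, dans l'esprit des nestohèdres de Postnikov et de la construction des compactifications merveilleuses de~\citet{feichtner-yuzvinsky2004}, dont l'éventail normal contient $\Sigma_{M,\mathscr P}$ comme sous-éventail. Un choix naturel est une somme de Minkowski pondérée du type $\Pi=\sum_{P\in\mathscr P}\lambda_P\,\Delta_P$, où $\Delta_P$ est le simplexe de sommets $(e_i)_{i\in P}$ dans $N_\R$, les coefficients $\lambda_P>0$ étant choisis décroissants avec l'inclusion des $P$. Pour chaque cône $\sigma_{I,\mathscr D}$ de $\Sigma_{M,\mathscr P}$, il s'agira alors d'identifier une face de $\Pi$ dont le cône normal contient précisément $\sigma_{I,\mathscr D}$ ; l'hypothèse de filtre garantit la cohérence du recollement de ces faces. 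La non-vacuité du cône ample en résultera immédiatement, puisqu'il contient par exemple la classe de la fonction linéaire par morceaux associée au support de $\Pi$.

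Pour~(2), je fixerai un cône maximal $\sigma_{I,\mathscr D}$ de $\widetilde\Sigma_{M,\mathscr P}$, de dimension $\Card(I)+d$ avec $d=\Card(\mathscr D)$. Écrivant $\mathscr D=(P_1\subsetneq\cdots\subsetneq P_d)$ et posant $f=\rk_M(P_1)$, je viserai l'égalité $\Card(I)+d=r$. La première étape exploitera l'hypothèse de filtre : tout plat strictement intercalé entre deux éléments consécutifs de $\mathscr D\cup\{\abs M\}$ appartiendrait automatiquement à $\mathscr P$, et permettrait d'allonger $\mathscr D$ sans perturber $(I,\inf\mathscr D)$. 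La maximalité imposera donc que les rangs des $P_i$ progressent de un en un, d'où $\rk(P_d)=r$ puis $d=r-f+1$.

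La seconde étape, plus subtile, établira $\Card(I)=f-1$ en raisonnant par l'absurde : supposons $\Card(I)<f-1$. Si $I$ n'est pas libre dans $M$, je choisirai $i'\in\langle I\rangle\setminus I$, nécessairement contenu dans $P_1$ ; alors $\langle I\cup\{i'\}\rangle=\langle I\rangle\not\in\mathscr P$, et la stricte inégalité $<_M$ reste valide, ce qui permet d'enrichir $I$ en contradiction avec la maximalité. Si au contraire $I$ est libre, pour chaque $i\in P_1\setminus I$ le plat $R_i=\langle I\cup\{i\}\rangle$ est de rang $\Card(I)+1$ et contient strictement~$I$. Le même argument d'enrichissement impose $R_i\in\mathscr P$. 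Si $R_i\subsetneq P_1$, on insère $R_i$ comme nouveau minimum dans $\mathscr D$ (la compatibilité vis-à-vis de~$M$ étant assurée par $\Card(I)<\rk(R_i)$), nouvelle contradiction. Il ne restera donc que $R_i=P_1$ pour tout~$i$, imposant $f=\rk(R_i)=\Card(I)+1$, ce qui contredit l'hypothèse $\Card(I)<f-1$.

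La principale difficulté tient à l'articulation de ces divers cas de maximalité, et en particulier à l'usage subtil de la stricte inégalité $\Card(I)<\rk_M(\inf\mathscr D)$ qui distingue $\widetilde\Sigma_{M,\mathscr P}$ de $\Sigma_{M,\mathscr P}$ ; la stabilité par dessus de $\mathscr P$ joue un rôle conjoint essentiel dans les arguments d'extension. Une fois cela mené, l'identité $\Card(I)+d=(f-1)+(r-f+1)=r$ conclura la démonstration.
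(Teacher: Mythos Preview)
Your sketch for~(1) is reasonable and in the spirit of the nestohedron and wonderful-compactification constructions you cite; the paper itself states this part without proof and refers back to \citet*{adiprasito-huh-katz2015}.

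For~(2), your case analysis contains a genuine gap. In Case~1 you assume ``$I$ n'est pas libre'' and then choose $i'\in\langle I\rangle\setminus I$; but dependence of~$I$ does \emph{not} imply $\langle I\rangle\supsetneq I$ --- a subset can perfectly well be a dependent flat. What your Case~1 argument actually requires is the hypothesis ``$I$ is not a flat''. Dually, your Case~2 claim that $\rk(R_i)=\Card(I)+1$ for every $i\in P_1\setminus I$ silently uses $i\notin\langle I\rangle$, which again amounts to~$I$ being a flat; this does follow \emph{a~posteriori} from maximality (if $i\in\langle I\rangle\setminus I$ then $R_i=\langle I\rangle\notin\mathscr P$ and one could enlarge~$I$), but as written the rank assertion precedes that deduction.

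The case left uncovered by your dichotomy is therefore: $I=\langle I\rangle$ is a \emph{dependent flat}, $I\notin\mathscr P$, $\Card(I)\leq f-2$. Here every cover~$R_i$ of~$I$ inside~$P_1$ has rank $\rk(I)+1\leq\Card(I)$, so the $<_M$ condition $\Card(I)<\rk(R_i)$ fails and your ``insert~$R_i$ below~$P_1$'' move is blocked. The repair is to insert a \emph{higher} flat instead: by maximality every cover of~$I$ contained in~$P_1$ lies in~$\mathscr P$, and since~$\mathscr P$ is a filter, every flat strictly between~$I$ and~$P_1$ lies in~$\mathscr P$; choose one, say~$P_0$, of rank $f-1$ in a maximal chain from~$I$ to~$P_1$ (it exists because $\rk(I)\leq\Card(I)\leq f-2$), and prepend it to~$\mathscr D$. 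Then $\Card(I)<f-1=\rk(P_0)$ gives the required $<_M$ compatibility, contradicting maximality. With this correction your overall strategy for~(2) goes through.
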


\subsection{}
L'éventail $\Sigma_{M,\mathscr P}$
a pour rayons, d'une part, les vecteurs~$e_i$,
pour $i\in\abs M$ tel que $\langle i\rangle \not\in \mathscr P$,
et, d'autre part, les vecteurs~$e_{P}$, où $P\in\mathscr P\setminus\{\abs M\}$.
Son anneau de Chow $A(\Sigma_{M,\mathscr P})$, noté aussi $A(M,\mathscr P)$,
est ainsi le quotient
de l'anneau de polynômes $S_{M,\mathscr P}$ en des
indéterminées~$T_i$ (pour $i\in\abs M$) et $T_P$ (pour $P\in\mathscr P
\setminus\{\abs M\}$)
par l'idéal engendré par les éléments du type suivant:
\begin{enumerate}
\def\theenumi{$R_{\arabic{enumi}}$}\def\labelenumi{(\theenumi)}
\item $T_{P_1} T_{P_2}$, où $P_1,P_2\in\mathscr P$ 
sont deux plats incomparables;
\item $T_i T_P$, où $P\in\mathscr P$ et $i\in \abs M\setminus P$;
\item $\prod_{i\in I} T_i$, où $I$ est une partie libre non vide de~$\abs M$
telle que $\langle I\rangle \in\mathscr P$;
\item $(T_i+\sum_{P\ni i}T_P)-(T_j+\sum_{P\ni j}T_P)$,
où $i,j$ sont des éléments de~$\abs M$, distincts.
\end{enumerate}
Pour $i\in \abs M$ et $P\in\mathscr P$,
on notera $t_i$, resp. $t_P$, la classe de~$T_i$,
resp. de~$T_P$, dans l'anneau~$A(\Sigma_{M,\mathscr P})$.

L'éventail $\widetilde\Sigma_{M,\mathscr P}$  
est un sous-éventail 
de $\Sigma_{M,\mathscr  P}$. 
Son anneau de Chow $A(\widetilde\Sigma_{M,\mathscr P})$
est donc un quotient de l'anneau $A(\Sigma_{M,\mathscr P})$.
En vérifiant que les relations supplémentaires 
découlent de celles imposées dans $A(\Sigma_{M,\mathscr P})$,
on démontre la proposition suivante.

\begin{prop}
L'inclusion de $X(\widetilde\Sigma_{M,\mathscr P})$ dans
$X(\Sigma_{M,\mathscr P})$ induit un isomorphisme
de $A(\Sigma_{M,\mathscr P})$ sur $A(\widetilde\Sigma_{M,\mathscr P})$.
En particulier, $A^k(\Sigma_{M,\mathscr P})=0$ pour $k>r$.
\end{prop}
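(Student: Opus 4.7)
L'inclusion $\widetilde\Sigma_{M,\mathscr P} \hookrightarrow \Sigma_{M,\mathscr P}$ est une inclusion d'éventails unimodulaires partageant exactement les mêmes rayons (les cas dégénérés, notamment $r=0$, se traitant à part) : en effet, les générateurs primitifs des cônes de dimension~$1$ sont, dans l'un comme dans l'autre, les vecteurs $e_i$ pour $i \in \abs M$ avec $\langle i \rangle \notin \mathscr P$, et les vecteurs $e_P$ pour $P \in \mathscr P \setminus \{\abs M\}$. Les deux anneaux de Chow admettent ainsi $S_{M,\mathscr P}$ comme présentation commune, et l'homomorphisme $A(\Sigma_{M,\mathscr P}) \to A(\widetilde\Sigma_{M,\mathscr P})$ induit par l'immersion ouverte $X(\widetilde\Sigma_{M,\mathscr P}) \hookrightarrow X(\Sigma_{M,\mathscr P})$ est automatiquement surjectif ; toute la difficulté consiste à établir son injectivité.

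Or l'idéal de Stanley--Reisner de $\widetilde\Sigma_{M,\mathscr P}$ contient celui de $\Sigma_{M,\mathscr P}$, les générateurs excédentaires étant les monômes $m_{I,\mathscr D} = \prod_{i \in I} T_i \cdot \prod_{P \in \mathscr D} T_P$ attachés aux couples $(I,\mathscr D)$ vérifiant $I \subsetneq \inf(\mathscr D)$, $\langle I \rangle \notin \mathscr P$, mais $\Card(I) \geq \rk_M(\inf(\mathscr D))$. L'objectif est donc de montrer qu'un tel $m_{I,\mathscr D}$ s'annule déjà dans $A(\Sigma_{M,\mathscr P})$ modulo les relations $(R_1)$--$(R_4)$.

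Posons $P_0 = \inf(\mathscr D)$. Les inclusions $\langle I \rangle \subseteq P_0 \in \mathscr P$ combinées à $\langle I \rangle \notin \mathscr P$ fournissent $\langle I \rangle \subsetneq P_0$ ; l'hypothèse $\Card(I) \geq \rk_M(P_0) > \rk_M(\langle I \rangle)$ montre alors que $I$ est nécessairement une partie liée de $\abs M$. Choisissons un circuit $C \subseteq I$, un élément $i_0 \in C$ avec $\langle C \setminus \{i_0\}\rangle = \langle C\rangle$, et un élément $j \in P_0 \setminus \langle C \rangle$. La relation $(R_4)$ permet de récrire $t_{i_0} - t_j$ comme une somme signée de classes $t_Q$ de plats $Q \in \mathscr P \setminus \{\abs M\}$ contenant exactement l'un des deux éléments $i_0, j$. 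En substituant cette expression dans $m_{I,\mathscr D}$, puis en utilisant $(R_1)$ pour annuler tout produit de plats incomparables et $(R_2)$ pour annuler tout facteur $t_i t_P$ avec $i \notin P$, chaque terme qui subsiste fait apparaître un facteur $\prod_{k \in K} t_k$ avec $K \subseteq \abs M$ libre et $\langle K \rangle \in \mathscr P$ : ce facteur est annulé par $(R_3)$. Une récurrence sur l'entier $\Card(I) - \rk_M(\langle I \rangle) \geq 1$ ramène le cas général au cas d'un unique circuit.

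La deuxième assertion, $A^k(\Sigma_{M,\mathscr P}) = 0$ pour $k > r$, en découle immédiatement : via l'isomorphisme $A(\Sigma_{M,\mathscr P}) \simeq A(\widetilde\Sigma_{M,\mathscr P})$ que l'on vient d'établir, elle équivaut à $A^k(\widetilde\Sigma_{M,\mathscr P}) = 0$ pour $k > r$, ce qui résulte de la pureté de dimension~$r$ de $\widetilde\Sigma_{M,\mathscr P}$ affirmée au lemme précédent (l'anneau de Chow d'un éventail unimodulaire est engendré en degré~$k$ par les monômes $T_\sigma$ associés aux cônes $\sigma$ de dimension~$k$). L'obstacle principal sera la comptabilité combinatoire lors de la cascade de substitutions dans la troisième étape, pour garantir qu'à chaque étape on retombe effectivement sur une relation connue $(R_1)$, $(R_2)$ ou $(R_3)$, sans introduire de nouveaux « mauvais » monômes.
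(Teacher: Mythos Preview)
Your strategy --- show that each extra Stanley--Reisner generator of $\widetilde\Sigma_{M,\mathscr P}$ already lies in $I_{\Sigma_{M,\mathscr P}}+J_{\Sigma_{M,\mathscr P}}$ --- is exactly the one-line proof the paper indicates, and your inductive invariant $\Card(I)-\rk_M\langle I\rangle$ is the right one. Two points need sharpening before the sketch is complete.

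First, choose $j\in P_0\setminus\langle I\rangle$ rather than $P_0\setminus\langle C\rangle$. If $\langle C\rangle\subsetneq\langle I\rangle$ and $j$ happens to lie in $\langle I\rangle\setminus\langle C\rangle$, then $\langle (I\setminus\{i_0\})\cup\{j\}\rangle=\langle I\rangle$ and the invariant does not decrease. Since $\langle I\rangle\subsetneq P_0$, an element $j\in P_0\setminus\langle I\rangle$ always exists.

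Second, the claim that after applying $(R_1)$ and $(R_2)$ ``chaque terme qui subsiste est annulé par $(R_3)$'' is not correct outside the base case --- this is precisely the bookkeeping you flag as the obstacle. After substituting $t_{i_0}=t_j+\sum_{Q}\pm t_Q$, two types of terms survive $(R_1)$ and $(R_2)$:
\begin{itemize}
\item the term $m_{I',\mathscr D}$ with $I'=(I\setminus\{i_0\})\cup\{j\}$, which has $\lvert I'\rvert=\lvert I\rvert$ and $\rk_M\langle I'\rangle=\rk_M\langle I\rangle+1$; either $\langle I'\rangle\in\mathscr P$ and $(R_3)$ applies to a basis of $I'$, or else the invariant has dropped and induction applies;
\item terms $\pm m_{I\setminus\{i_0\},\{Q\}\cup\mathscr D}$ for each $Q\in\mathscr P\setminus\{\abs M\}$ with $\langle I\rangle\subsetneq Q\subsetneq P_0$. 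Here $\langle I\setminus\{i_0\}\rangle=\langle I\rangle\notin\mathscr P$, so $(R_3)$ does \emph{not} apply. But $\lvert I\setminus\{i_0\}\rvert-\rk_M\langle I\rangle$ is one less than before, and $\lvert I\rvert-1\geq\rk_M(P_0)-1\geq\rk_M(Q)$, so the \emph{induction hypothesis} (not $(R_3)$) disposes of these terms.
\end{itemize}
In the base case $\Card(I)-\rk_M\langle I\rangle=1$ one checks $\rk_M(P_0)=\lvert I\rvert$, so no intermediate $Q$ exists and the single surviving term has $\langle I'\rangle=P_0\in\mathscr P$, whence $(R_3)$ indeed concludes. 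With the induction step organised this way, your argument goes through.
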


\begin{exem}
Supposons que $\mathscr P=\{\abs M\}$; pour simplifier les notations,
posons $\Sigma=\Sigma_{M,\{M\}}$
et $\widetilde\Sigma=\widetilde\Sigma_{M,\{M\}}$.

Les rayons de l'éventail~$\Sigma$ sont les vecteurs~$e_i$,
pour $i\in \abs M$.
Plus généralement, un cône~$\sigma_I$ appartient à~$\Sigma$
si et seulement si $\langle I\rangle \neq\abs M$;
il appartient à~$\widetilde\Sigma$ si et seulement si $\Card( I)\leq r$.
En posant $n=\Card(\abs M)-1$,
on voit donc que le support de~$\widetilde\Sigma$ est la réunion
des cônes de dimension~$\leq r$ de l'éventail de
l'espace projectif~$\P_n$.

L'anneau de Chow $A(M,\{M\})$ n'a donc 
aucun générateur de la forme~$T_P$;
les relations des types~($R_1$) et~($R_2$) sont alors triviales.
Les relations du type~($R_4$) entraînent que
les~$T_i$,  pour tout~$i\in\abs M$, ont même image dans~$A(M,\{\abs M\})$;
notons~$t$ cette image.
Les relations du type~($R_3$) s'écrivent alors $t^{r+1}=0$.
Autrement dit,
\[ A(M,\{\abs M\}) = \Z[t]/(t^{r+1}); \]
c'est donc l'anneau de Chow de l'espace projectif~$\P_r$,
et il vérifie de façon évidente la conclusion du théorème~\ref{theo.hl}.
\end{exem}

\subsection{}\label{ss.flip}
Ces notations introduites,
la démonstration du théorème~\ref{theo.hl}
procède par récurrence et
consiste à examiner le comportement
de l'anneau $A({M,\mathscr P})$
lorsqu'on adjoint au filtre~$\mathscr P$ un plat~$P$ 
qui est maximal dans~$\mathscr P_M\setminus \mathscr P$.
Dans l'article~\citep{adiprasito-huh-katz2015},
la modification d'éventails qui en résulte est appelée \emph{flip matroïdal}
de centre~$P$.
Cela revient à enlever à~$\Sigma_{M,\mathscr P}$
les cônes $\sigma_{I,\mathscr D}$
tels que
$I<\mathscr D$, $\langle I\rangle=P$ et $\inf(\mathscr D)\neq P$,
et à lui ajouter les cônes $\sigma_{I,\mathscr D}$
où $I<\mathscr D$, $\langle I\rangle\neq P$ et $\inf(\mathscr D)=P$.
Posons $\mathscr P'=\mathscr P\cup\{P\}$.
Le plat~$P$ est minimal dans~$\mathscr P'$.

Rappelons aussi que l'on note $M\contr P$ (resp. $M\restr P$) 
le matroïde dont les plats sont ceux de~$M$
contenant~$P$ (resp. contenus dans~$P$).

On raisonne par récurrence, en supposant la conclusion
du théorème~\ref{theo.hl} satisfaite par 
tout anneau de la forme $A(M_1,\mathscr P_1)$
tel que soit $\rk(M_1)< \rk(M)$,
soit $\rk(M_1)=\rk(M)$ et $\Card(\mathscr P_1) < \Card(\mathscr P)$.

\begin{prop}
Il existe un unique homomorphisme d'anneaux
\[ \Phi_P\colon A(M,\mathscr P)\to A(M,\mathscr P') \]
tel que $\Phi_P(t_Q)=t_Q$ pour tout $Q\in\mathscr P\setminus\{\abs M\}$
et tel que $\Phi_P(t_i)=t_i+t_P$ si $i\in P$, 
et $\Phi_P(t_i)=t_i$ sinon.
Si $\rk_M(P)=1$, c'est un isomorphisme.
\end{prop}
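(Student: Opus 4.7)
The plan is to define $\Phi_P$ first as a homomorphism from the polynomial ring $S_{M,\mathscr P}$ into $A(M,\mathscr P')$ by the prescribed formulas on the generators $T_i$ ($i\in\abs M$) and $T_Q$ (for $Q\in\mathscr P\setminus\{\abs M\}$), then verify that it descends to the quotient by checking each defining relation $(R_1)$--$(R_4)$ of $A(M,\mathscr P)$ maps to zero in $A(M,\mathscr P')$, and finally, when $\rk_M(P)=1$, exhibit an explicit inverse $\Psi$. Uniqueness of $\Phi_P$ is automatic, since its values are prescribed on generators.

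For the well-definedness, $(R_1)$ is direct since the inclusion $\mathscr P\subseteq\mathscr P'$ preserves incomparability, and $(R_4)$ translates correctly because the additional summand $t_P$ picked up by $\Phi_P(T_i)$ for $i\in P$ is exactly the extra term appearing in $\sum_{Q\ni i}t_Q$ computed relative to $\mathscr P'$ (and is absent for $i\notin P$). For $(R_2)$ the key point is that $\mathscr P$ is a filter and $P$ is maximal in $\mathscr P_M\setminus\mathscr P$, so every $Q\in\mathscr P$ either strictly contains $P$ or is incomparable with $P$; in the case $i\in P$, $i\notin Q$, one has $Q\not\supseteq P$, hence $P$ and $Q$ are incomparable and $t_P t_Q=0$ by $(R_1)$ in $\mathscr P'$. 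The crux is $(R_3)$: expanding
\[ \prod_{i\in I}\Phi_P(T_i)=\prod_{i\in I\setminus P}t_i\cdot\prod_{i\in I\cap P}(t_i+t_P), \]
any term containing $t_P$ together with some factor $t_j$, $j\in I\setminus P$, vanishes by $(R_2)$ in $\mathscr P'$; surviving monomials with a factor $t_P$ thus force $I\subseteq P$, in which case $\langle I\rangle\subseteq P$ combined with $\langle I\rangle\in\mathscr P$ contradicts the filter property. The remaining term $\prod_{i\in I}t_i$ is killed by $(R_3)$ in $\mathscr P'$.

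For the isomorphism when $\rk_M(P)=1$, two identities become available. In $A(M,\mathscr P')$, applying $(R_3)$ to the singleton $\{i\}$ (free since $M$ has no loops, with $\langle i\rangle=P\in\mathscr P'$) yields $t_i=0$ for every $i\in P$. In $A(M,\mathscr P)$, the sum $\sum_{Q\ni i,\,Q\in\mathscr P\setminus\{\abs M\}}t_Q$ is independent of $i\in P$, equal to $\sum_{Q\supsetneq P,\,Q\in\mathscr P\setminus\{\abs M\}}t_Q$ (any $Q\in\mathscr P$ meeting $P$ contains $P=\langle i\rangle$, strictly since $P\notin\mathscr P$); relation $(R_4)$ then forces $t_i=t_j$ for all $i,j\in P$. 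Fix $i_0\in P$ and define $\Psi\colon S_{M,\mathscr P'}\to A(M,\mathscr P)$ on generators by $\Psi(T_Q)=t_Q$ for $Q\in\mathscr P\setminus\{\abs M\}$, $\Psi(T_P)=t_{i_0}$, $\Psi(T_i)=0$ for $i\in P$, and $\Psi(T_i)=t_i$ for $i\notin P$. A case analysis parallel to the one above shows $\Psi$ kills each relation of $A(M,\mathscr P')$, after which $\Psi\circ\Phi_P=\mathrm{id}$ (using $t_i=t_{i_0}$ in $A(M,\mathscr P)$ for $i\in P$) and $\Phi_P\circ\Psi=\mathrm{id}$ (using $t_i=0$ in $A(M,\mathscr P')$ for $i\in P$) hold on generators, hence everywhere.

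The main obstacle is the case analysis for $(R_3)$, which must repeatedly exploit the filter property of $\mathscr P$ together with the maximality of $P$ in $\mathscr P_M\setminus\mathscr P$. The rank-one hypothesis enters only at the final step: it is precisely what makes the generators $t_i$, $i\in P$, collapse into a single class on both sides, compatibly with $\Phi_P$ and $\Psi$.
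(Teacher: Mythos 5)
La proposition est énoncée sans démonstration dans ce rapport (le texte renvoie explicitement à \citet*[\S 6--8]{adiprasito-huh-katz2015} pour les détails), de sorte qu'il n'y a pas de preuve « du papier » à laquelle comparer la vôtre ; je l'évalue donc sur pièces. Votre argument est correct et suit la voie naturelle (celle d'AHK pour l'existence) : vérification que chaque relation $(R_1)$--$(R_4)$ de $A(M,\mathscr P)$ s'envoie sur zéro dans $A(M,\mathscr P')$, le point délicat étant bien $(R_3)$, où la propriété de filtre exclut $I\subseteq P$ dès que $\langle I\rangle\in\mathscr P$, de sorte que tout monôme contenant $t_P$ contient aussi un $t_j$ avec $j\notin P$ et meurt par $(R_2)$. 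Deux précisions seulement. D'une part, pour la bonne définition de $\Phi_P$ la maximalité de $P$ dans $\mathscr P_M\setminus\mathscr P$ n'intervient pas dans la vérification de $(R_3)$, contrairement à ce que suggère votre dernier paragraphe : elle sert uniquement à garantir que $\mathscr P'$ est encore un filtre. D'autre part, dans la construction de l'inverse $\Psi$ en rang~$1$, le cas de $(R_2)$ pour $\mathscr P'$ avec $Q=P$ et $i\notin P$ demande de montrer $t_i\,t_{i_0}=0$ dans $A(M,\mathscr P)$, ce qui exige de savoir que $\langle\{i_0,i\}\rangle\in\mathscr P$ ; c'est \emph{là} que la maximalité de $P$ est réellement utilisée (tout plat contenant strictement $P$ appartient à $\mathscr P$), et ce cas mériterait d'être explicité plutôt que couvert par « une analyse de cas parallèle ». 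Notez enfin que l'énoncé d'isomorphisme en rang~$1$ se déduit aussi du théorème~\ref{theo.dec} (la somme $\sum_{p=1}^{\rk_M(P)-1}$ y est vide) ; votre inverse explicite est plus élémentaire et autonome, ce qui est un avantage.
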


\begin{prop}
Soit $p$ un entier~$\geq 1$.
\begin{enumerate}\def\labelenumi{\theenumi)}
\item
Il existe un unique homomorphisme de groupes
$\Psi_P^p\colon A(M\contr P)\to A(M,{\mathscr P'})$
tel que $\Psi_P^p(t_{\mathscr D})=t_P ^p t_{\mathscr D}$
pour tout drapeau $\mathscr D$ de plats de~$M_P$;
il est homogène de degré~$p$.
\item
Il existe un unique homomorphisme de groupes
$\Gamma_P^p\colon A(M\restr P)\to A(M)$ 
tel que $\Gamma_P^p(t_{\mathscr D})=t_P^p t_{\mathscr D}$
pour tout drapeau $\mathscr D$ de plats de~$M^P$;
il est homogène de degré~$p$.
\end{enumerate}
\end{prop}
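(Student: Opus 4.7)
Uniqueness of both homomorphisms follows from the fact that $A(M\contr P)$ and $A(M\restr P)$ are generated as abelian groups by the monomials $t_{\mathscr D}$ associated to flags of flats --- a general property of Chow rings of smooth unimodular fans, whose torus-invariant classes $t_\sigma$ form a spanning set. Hence any $\Z$-linear map out of either ring is determined by its values on these monomials.

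For the existence of $\Psi_P^p$, I plan to construct the map by lifting to the polynomial ring. Identify the flats of $M\contr P$ distinct from $\emptyset$ with the flats $Q$ of $M$ strictly containing $P$, via $Q\mapsto Q\setminus P$; because $P$ is minimal in $\mathscr P'$, every such $Q$ lies in $\mathscr P'$, and the ground set $\abs M\setminus P$ sits inside $\abs M$. These inclusions determine a natural map $\iota$ sending each generator of $S_{M\contr P}$ to the generator of $S_{M,\mathscr P'}$ bearing the same label. I then define a homogeneous $\Z$-linear map of degree~$p$
\[ \widetilde\Psi\colon S_{M\contr P} \to S_{M,\mathscr P'}, \qquad u\longmapsto T_P^p\,\iota(u), \]
and the content of the assertion becomes that $\widetilde\Psi$ sends the defining ideal $I_{M\contr P}+J_{M\contr P}$ into $I_{M,\mathscr P'}+J_{M,\mathscr P'}$, whence $\Psi_P^p$ arises by passage to the quotient.

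The verification splits by relation type. Relations of types $(R_1)$ and $(R_2)$ of $M\contr P$ are sent by $\iota$ to relations of the same type in $S_{M,\mathscr P'}$ (incomparability of flats and the condition $i\notin Q$ are preserved), so their $T_P^p$-multiples lie in $I_{M,\mathscr P'}$. A relation $\prod_{i\in I}T_i$ of type~$(R_3)$ with $\emptyset\neq I\subseteq\abs M\setminus P$ becomes $T_P^p\prod_{i\in I}T_i$; fixing $i_0\in I$, the product $T_{i_0}T_P$ is itself a relation of type~$(R_2)$ for the flat $P\in\mathscr P'$ and the element $i_0\notin P$, so the whole expression vanishes modulo $I_{M,\mathscr P'}$. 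The delicate case is~$(R_4)$: for distinct $i,j\in\abs M\setminus P$, the image of the $(R_4)$-relation of $M\contr P$ differs from the corresponding $(R_4)$-relation of $(M,\mathscr P')$ by a signed sum of $T_Q$ indexed by flats $Q\in\mathscr P'$ which contain $i$ or $j$ and do \emph{not} strictly contain $P$. Since $i,j\notin P$, no such $Q$ can equal $P$ or be contained in $P$, so $Q$ must be incomparable to $P$; hence $T_PT_Q\in I_{M,\mathscr P'}$ by~$(R_1)$, and therefore $T_P^pT_Q$ lies there as well. The discrepancy is thereby absorbed into the ideal after multiplication by $T_P^p$, while $T_P^p$ times the $(R_4)$-relation of $(M,\mathscr P')$ itself lies in $J_{M,\mathscr P'}$; descent follows.

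The construction of $\Gamma_P^p$ is entirely parallel: one identifies the non-empty proper flats of $M\restr P$ with the flats $Q$ of $M$ satisfying $\emptyset\subsetneq Q\subsetneq P$, defines $\widetilde\Gamma\colon S_{M\restr P}\to S_M$ by $u\mapsto T_P^p\,\iota(u)$, and checks the four families of relations. For~$(R_4)$ with $i,j\in P$ distinct, the discrepancy consists of flats $Q$ of $M$ containing $i$ or $j$ and not contained in $P$; the contributions with $Q=P$ or $Q\supsetneq P$ cancel between $i$ and $j$ (both belong to $P$, hence to every such $Q$), while a contribution from $Q$ incomparable to $P$ is killed after multiplication by $T_P$ via~$(R_1)$. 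The main obstacle throughout is precisely this bookkeeping of the $(R_4)$-relations, which rests on the minimality of $P$ in $\mathscr P'$ (for $\Psi_P^p$) together with the incomparability-vanishing relations of type~$(R_1)$.
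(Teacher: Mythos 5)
This exposé does not actually prove the proposition — like the rest of Section~3 it only states it, deferring to \citet*[\S 6--7]{adiprasito-huh-katz2015} for details — so your argument can only be compared with the original one, which it essentially reproduces: lift the desired map to the polynomial ring as multiplication by $T_P^p$ composed with the tautological substitution of generators, and check that each defining relation of the source lands in the defining ideal of the target. Your case analysis is correct. Types $(R_1)$ and $(R_2)$ map to relations of the same type; type $(R_3)$ is absorbed by pairing $T_P$ with some $T_{i_0}$, $i_0\notin P$, via $(R_2)$ — which is exactly where the hypothesis $p\geq 1$ is used; and for the linear relations $(R_4)$ the discrepancy between the two sides consists of variables $T_Q$ with $Q$ incomparable to $P$ (killed by $(R_1)$ after multiplication by $T_P$), the comparable terms cancelling between $i$ and $j$ in the case of $\Gamma_P^p$ since $i,j\in P$. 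Uniqueness from the generation of each $A^k$ by the monomials $t_{\mathscr D}$ is also right.

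One misattribution of hypotheses, which does not affect the validity of the proof: the fact that every flat $Q$ with $P\subsetneq Q\subsetneq \abs M$ indexes a generator of $S_{M,\mathscr P'}$ does not follow from the \emph{minimality} of $P$ in $\mathscr P'$, but from the maximality of $P$ in $\mathscr P_M\setminus\mathscr P$ — equivalently, from the fact that $\mathscr P'$ is again a filter containing $P$, hence contains every flat above $P$. Similarly, the $(R_4)$ bookkeeping rests on $i,j\notin P$ together with the incomparability relations $(R_1)$, not on minimality of $P$. The hypotheses you actually need are available; just name them correctly.
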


\begin{theo}\label{theo.dec}
L'homomorphisme
\[ \Phi_P + \sum_{p=1}^{\rk_M(P)-1} \Psi_P^p 
\colon A(M,\mathscr P) \oplus A(M\contr P)[-p] \to A(M,\mathscr P') \]
est un isomorphisme d'anneaux gradués,
où le symbole $[-p]$ signifie que la graduation est décalée de~$-p$.
\end{theo}

\begin{coro}
L'homomorphisme~$\Phi_P$ induit un isomorphisme
\[ A^r(M,\mathscr P) \xrightarrow\sim A^r(M,\mathscr P') \]
et l'algèbre $A(M,\mathscr P')_\R$ muni de l'isomorphisme
$\deg\circ\Phi_P\colon A^r(M,\mathscr P')\to\Z$,
vérifie la dualité de Poincaré.
\end{coro}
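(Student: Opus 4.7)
Plan de preuve.

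La première assertion se déduit directement du théorème~\ref{theo.dec} par un argument dimensionnel. Le matroïde contracté $M\contr P$ étant de rang $\rk(M)-\rk_M(P)$, son anneau de Chow $A(M\contr P)$ s'annule au-delà du degré $r-\rk_M(P)$. Pour tout entier~$p$ avec $1\leq p\leq\rk_M(P)-1$, on a donc $A^{r-p}(M\contr P)=0$, et l'isomorphisme d'anneaux gradués du théorème~\ref{theo.dec}, restreint à la composante de degré~$r$, se réduit à $\Phi_P\colon A^r(M,\mathscr P)\xrightarrow{\sim} A^r(M,\mathscr P')$. L'application $\deg\circ\Phi_P^{-1}\colon A^r(M,\mathscr P')\to\Z$ est ainsi un isomorphisme bien défini.

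Pour la dualité de Poincaré, on se place dans le schéma de récurrence annoncé au~\S\ref{ss.flip}: l'hypothèse de récurrence fournit la dualité de Poincaré pour $(A(M,\mathscr P),\deg)$ (filtre de cardinal strictement inférieur à $\mathscr P'$), ainsi que pour $A(M\contr P)$ (le rang étant strictement inférieur à celui de~$M$). Posons $s=\rk_M(P)$. Pour chaque entier~$k$, la décomposition du théorème~\ref{theo.dec} s'écrit
\[ A^k(M,\mathscr P') = V_0^k\oplus V_1^k\oplus\dotsb\oplus V_{s-1}^k, \]
avec $V_0^k=\Phi_P(A^k(M,\mathscr P))$ et $V_p^k=\Psi_P^p(A^{k-p}(M\contr P))$ pour $p\geq 1$. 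On analyse la matrice de l'accouplement $(a,b)\mapsto \deg\circ\Phi_P^{-1}(ab)$ selon cette décomposition. La comparaison des dimensions impose d'apparier $V_0^k$ avec $V_0^{r-k}$ (via la dualité sur $A(M,\mathscr P)$) et, pour $1\leq p\leq s-1$, $V_p^k$ avec $V_{s-p}^{r-k}$ (via la dualité sur $A(M\contr P)$, puisque les degrés $k-p$ et $r-k-(s-p)$ s'additionnent bien à $r-s=\dim A(M\contr P)$).

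La partie délicate consiste à montrer que la matrice de l'accouplement, écrite selon cette décomposition, est triangulaire (ou anti-triangulaire) par blocs, avec pour blocs principaux les accouplements de Poincaré sur $A(M,\mathscr P)$ et $A(M\contr P)$ précités. L'obstacle technique principal réside dans l'explicitation des produits $\Phi_P(a)\cdot\Psi_P^p(b)$ et $\Psi_P^p(a)\cdot\Psi_P^q(b)$ dans $A(M,\mathscr P')$: il faut notamment établir qu'un produit $\Psi_P^p\cdot\Psi_P^q$ se décompose sur les $V_{p'}$ pour $p'\geq p+q$, et s'annule lorsque $p+q\geq s$. Ces identités reposent sur les relations $(R_1)$--$(R_4)$ définissant $A(M,\mathscr P')$ ainsi que sur la définition des homomorphismes $\Psi_P^p$; une fois acquises, la dualité de Poincaré pour $A(M,\mathscr P')_\R$ se déduit formellement, par récurrence sur l'indice des blocs, de celles déjà connues pour $A(M,\mathscr P)$ et $A(M\contr P)$.
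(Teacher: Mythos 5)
Votre preuve de la première assertion est correcte et c'est bien l'argument attendu : $A(M\contr P)$ étant nul en degré $>r-\rk_M(P)$, les composantes de degré~$r$ des facteurs décalés $A(M\contr P)[-p]$, $1\leq p\leq\rk_M(P)-1$, sont nulles, et le théorème~\ref{theo.dec} se réduit en degré~$r$ à l'isomorphisme induit par~$\Phi_P$. Le schéma que vous proposez pour la dualité de Poincaré --- matrice de l'accouplement triangulaire par blocs relativement à la décomposition $V_0\oplus\dotsb\oplus V_{s-1}$, avec $V_0^k$ apparié à $V_0^{r-k}$ via la dualité de $A(M,\mathscr P)$ et $V_p^k$ à $V_{s-p}^{r-k}$ via celle de $A(M\contr P)$ --- est aussi celui d'Adiprasito, Huh et Katz, que le présent rapport ne fait qu'esquisser.

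Il y a en revanche une erreur qui constitue une lacune réelle : l'identité que vous annoncez, selon laquelle $\Psi_P^p(a)\cdot\Psi_P^q(b)$ s'annulerait dès que $p+q\geq s$, est fausse et contredit le reste de votre argument. Si elle était vraie, le bloc $V_p^k\times V_{s-p}^{r-k}$ serait identiquement nul, et non égal, comme vous l'affirmez quelques lignes plus haut, à l'accouplement de Poincaré non dégénéré de $A(M\contr P)$ ; jointe à votre autre affirmation (les produits tombent dans $\bigoplus_{p'\geq p+q}V_{p'}$) et au fait que $A^r(M,\mathscr P')=V_0^r$, elle entraînerait la nullité de la restriction de l'accouplement à $\bigoplus_{p\geq 1}V_p$, donc sa dégénérescence. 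Le point non formel de la démonstration est précisément le calcul contraire : la puissance $t_P^{s}$ ne s'annule pas, mais se réécrit, au moyen des relations $(R_3)$ et $(R_4)$, en termes des autres générateurs, et l'on obtient
\[ \deg\circ\Phi_P^{-1}\bigl(\Psi_P^p(a)\cdot\Psi_P^{s-p}(b)\bigr)=(-1)^{s-1}\,\deg_{M\contr P}(ab) \]
pour $a,b$ de degrés complémentaires dans $A(M\contr P)$ --- analogue combinatoire de la formule d'auto-intersection $\deg(E^{c}\cdot\pi^*[\mathrm{pt}])=(-1)^{c-1}$ pour le diviseur exceptionnel d'un éclatement le long d'un centre de codimension~$c$. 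C'est cette non-annulation qui rend les blocs anti-diagonaux inversibles ; sans elle, votre récurrence sur l'indice des blocs ne peut pas conclure.
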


\subsection{}
La démonstration du théorème~\ref{theo.dec}
commence par établir la surjectivité
de l'homomorphisme indiqué. On en déduit ensuite que
l'homomorphisme $\Psi_P^{\rk_M(P)}$ induit un isomorphisme
de $A^{r-\rk_M(P)}(M\contr P)$ sur~$A^r(M,\mathscr P)$.
Sous l'hypothèse
que la dualité de Poincaré vaut pour $A(M,\mathscr P)$
et $A(M\contr P)$, une dernière étape prouve l'injectivité
de l'homomorphisme.
Compte tenu du corollaire, l'algèbre $A(M,\mathscr P')$ vérifie
alors la dualité de Poincaré.

Par récurrence,
cela prouve ainsi que pour tout matroïde~$M$ de rang~$r+1$ et tout filtre
de plats~$\mathscr P$ sur~$M$, l'anneau $A(M,\mathscr P)$
vérifie la dualité de Poincaré en dimension~$r$.
En particulier,  pour $\mathscr P=\mathscr P_M$,
l'anneau~$A(M)$ vérifie la dualité de Poincaré.

\subsection{}
La dualité de Poincaré acquise, la démonstration du
théorème de Lefschetz difficile et celle des inégalités
de Hodge--Riemann sont menées de front:
le théorème de Lefschetz difficile
affirme que la forme bilinéaire de Hodge--Riemann
est non dégénérée, et les inégalités de Hodge--Riemann
en précisent la signature.

Grâce à la remarque
que la signature est une fonction localement constante
sur l'espace des formes quadratiques non dégénérées,
un argument élémentaire de déformation 
prouve que si le théorème de Lefschetz difficile est vérifié
pour toute classe ample,
alors les inégalités de Hodge--Riemann valent
pour toute classe ample si et seulement si elle valent pour
\emph{une} classe ample.

Soit $\Sigma$ un éventail unimodulaire
vérifiant la dualité de Poincaré en dimension~$r$.
Pour tout rayon $v\in V_\Sigma$, 
l'algèbre quotient $A(\Sigma)_\R/\ann(t_v)$,
associée à l'étoile $\star_\Sigma(\sigma)$,
satisfait la dualité de Poincaré en dimension~$r-1$,
et
\citet*{adiprasito-huh-katz2015}
introduisent la  variante « locale » des 
inégalités de Hodge--Riemann qui postule
que cette algèbre vérifie ces inégalités
pour (l'image de) toute classe ample dans~$\mathscr K_\Sigma$.

\begin{prop}
Les inégalités de Hodge--Riemann locales impliquent
le théorème de Lefschetz difficile.
\end{prop}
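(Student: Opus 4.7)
La stratégie est de montrer que, sous l'hypothèse des inégalités de Hodge--Riemann locales, le noyau de l'application de Lefschetz $\lambda^k:A^k(\Sigma)_\R\to A^{r-k}(\Sigma)_\R$ est trivial pour tout entier~$k$ avec $0\le k\le r/2$ et toute classe ample $\ell\in\mathscr K_\Sigma$. Par dualité de Poincaré (acquise en dimension~$r$), l'injectivité de~$\lambda^k$ entraîne qu'elle est un isomorphisme. Les cas $k=0$ (équivalent à $\deg(\ell^r)>0$, qui découle de l'amplitude) et $k=r/2$ (où $\lambda^k=\mathrm{id}$) sont triviaux; on se place donc dans la suite sous l'hypothèse $0<k<r/2$.

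L'amplitude de~$\ell$ signifie la stricte convexité de la fonction linéaire par morceaux qui le représente. En l'appliquant au cône trivial~$\{0\}$ et quitte à retrancher une forme linéaire sur~$N_\R$, on peut écrire $\ell=\sum_{v\in V_\Sigma} c_v\, t_v$ dans $A^1(\Sigma)_\R$ avec $c_v>0$ pour tout rayon~$v$. Pour chaque rayon~$v$, notons $A_v=A(\Sigma)_\R/\ann(t_v)$, de degré maximal $r-1$ et satisfaisant par hypothèse la dualité de Poincaré, $\bar a\in A^k_v$ l'image de $a\in A^k(\Sigma)_\R$, et $\deg_v:A^{r-1}_v\xrightarrow{\sim}\R$ l'isomorphisme caractérisé par $\deg_v(\bar c)=\deg(t_v\cdot c)$. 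La décomposition $\ell^{r-2k}=\ell\cdot\ell^{r-2k-1}$, suivie de la substitution de l'écriture de~$\ell$ ci-dessus, donne alors l'\emph{identité clé} : pour tous $a,b\in A^k(\Sigma)_\R$,
\[
  Q^k_\ell(a,b) = (-1)^k \deg\!\bigl(\ell^{r-2k}\,ab\bigr) = \sum_{v\in V_\Sigma} c_v\, Q^k_{\ell,v}(\bar a,\bar b),
\]
où $Q^k_{\ell,v}$ désigne la forme de Hodge--Riemann sur $A^k_v$ associée à l'image de~$\ell$.

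Soit maintenant $a\in A^k(\Sigma)_\R$ tel que $\ell^{r-2k}a=0$. D'une part, $Q^k_\ell(a,a)=0$; d'autre part, la relation $\ell^{r-2k}\bar a=0$ dans $A^{r-k}_v$ signifie exactement que $\bar a$ est primitive au sens de la décomposition de Lefschetz de l'algèbre d'étoile~$A_v$ de dimension $r-1$ (la condition primitive étant $\ell^{(r-1)-2k+1}\bar a=0$). Par les inégalités de Hodge--Riemann locales, $Q^k_{\ell,v}(\bar a,\bar a)\ge 0$ pour tout~$v$, avec égalité si et seulement si $\bar a=0$. L'identité clé et la positivité des~$c_v$ imposent donc $Q^k_{\ell,v}(\bar a,\bar a)=0$ pour tout~$v$, d'où $\bar a=0$ dans chaque $A^k_v$, c'est-à-dire $t_v\, a=0$ pour tout rayon~$v$. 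Comme les~$t_v$ engendrent $A^1(\Sigma)_\R$, et donc $A^{r-k}(\Sigma)_\R$ (puisque $r-k\ge 1$), il en résulte $a\cdot A^{r-k}(\Sigma)_\R=0$; la dualité de Poincaré donne alors $a=0$, ce qui établit l'injectivité cherchée.

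L'obstacle principal est la décomposition positive $\ell=\sum_v c_v\, t_v$ : elle traduit le fait qu'une classe ample se relève en un diviseur à coefficients strictement positifs, énoncé qui repose sur la définition même de l'amplitude via la stricte convexité (appliquée au cône trivial) et sur la description explicite du noyau de $S^1_\Sigma\to A^1(\Sigma)$. Une fois ce point acquis, l'identité clé et l'énoncé de positivité dans chaque étoile ramènent mécaniquement le théorème de Lefschetz difficile à l'hypothèse des inégalités locales.
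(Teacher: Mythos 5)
Your proof is correct and follows exactly the argument of Adiprasito--Huh--Katz (their prop.~7.15), which this report does not reproduce but only cites (\S6--8 of the original article): positive decomposition $\ell=\sum_v c_v t_v$ from strict convexity at the zero cone, localization of the Hodge--Riemann form onto the star algebras $A(\Sigma)_\R/\ann(t_v)$, local positivity on primitive classes forcing $t_v a=0$ for every ray, then Poincaré duality. The only point you gloss over is that the isomorphism $\deg_v(\bar c)=\deg(t_v c)$ you introduce must agree (in particular in sign) with the intrinsic degree map of the star for which the local Hodge--Riemann inequalities are stated --- a compatibility that is a separate lemma in the original article, though it is immediate with your normalization and does not affect the validity of the argument.
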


\subsection{}
On démontre la validité
des inégalités de Hodge--Riemann pour l'algèbre $A(M,\mathscr P)_\R$
en raisonnant par récurrence d'abord sur le rang de~$M$ 
puis sur le cardinal de~$\mathscr P$. Lorsque $\mathscr P=\emptyset$,
on a déjà mentionné que l'algèbre $A(M,\mathscr P)_\R$,
isomorphe à l'algèbre $\R[t]/(t^{r+1})$ associée à~$\P_r$,
vérifie le résultat voulu.
Une première réduction permet de supposer que $M$ est une géométrie
combinatoire.

Revenons alors au contexte d'un flip matroïdal (\S\ref{ss.flip}):
$\mathscr P$ est un filtre de plats sur~$M$,
$P$ est un plat maximal dans~$\mathscr P_M\setminus \mathscr P$
et $\mathscr P'=\mathscr P\cup\{P\}$.

Tout d'abord, \citet*[prop.~3.5]{adiprasito-huh-katz2015} observent
que l'étoile $\star_{\Sigma_{M,\mathscr P'}}$
de tout rayon~$v$ est le produit 
des éventails $\Sigma_{M\restr P,\mathscr P\restr  P}$ et $\Sigma_{M\contr P}$
si le rayon~$v$ est associé à un plat~$P$,
et l'éventail $\Sigma_{M_i,\mathscr P_i}$ si le rayon~$v$
est associé à un élément~$i$ de~$\abs M$.
(On a noté $\mathscr P\restr P$ l'ensemble des plats de~$\mathscr P$
qui sont contenus dans~$P$; c'est un filtre de plats sur~$M\restr P$.)
Dans ce dernier cas, l'éventail vérifie les inégalités de Hodge--Riemann,
par l'hypothèse de récurrence. Dans le premier cas,
les deux éventails qui interviennent les vérifient également,
donc leur produit aussi: cela revient à prouver que l'algèbre
$A(M\restr P,\mathscr P\restr P)\otimes A(M\contr P)$ vérifie ces inégalités,
ce que l'on démontre en se ramenant au cas
du produit tensoriel $\R[t]/(t^{a+1})\otimes \R[t]/(t^{b+1})$
associé à $\P_a\times\P_b$.

Ainsi, l'éventail $\Sigma_{M,\mathscr P'}$ vérifie les 
inégalités locales de Hodge--Riemann. Il vérifie
donc le théorème de Lefschetz difficile.

Il reste à démontrer les inégalités de Hodge--Riemann,
mais il suffit maintenant de les vérifier pour \emph{une} classe.
La conclusion du théorème~\ref{theo.dec}
fournit un isomorphisme
de $\R$-espaces vectoriels gradués
\[ A(M,\mathscr P')_\R \simeq A(M,\mathscr P)_\R \oplus
       \big( \R[t]/(t^{\rk_M(P)-1}) \otimes_\R A(M\contr P) \big)[-1] \]
qu'\citet*{adiprasito-huh-katz2015}
utilisent pour construire des classes amples qui vérifient
les inégalités de Hodge--Riemann.

\section{Plats}

\subsection{}
Concluons ce rapport en évoquant une autre conjecture, encore ouverte,
en combinatoire énumérative des matroïdes, mais à laquelle
l'article de~\cite{huh-wang2017} apporte une réponse positive
dans le cas représentable.

Soit $M$ un matroïde. Pour tout entier~$k$, on note $M^{(k)}$
l'ensemble des plats de rang~$k$ de~$M$ et on pose $W_k(M)=\Card(M^{(k)})$;
ce sont les \emph{nombres de Whitney de seconde espèce} de~$M$.
Ils sont nuls pour $k<0$ ou $k>\rk(M)$.

\begin{conj}[conjecture de Rota--Welsh]
\label{conj.Wk}
Soit $M$ un matroïde et soit $r=\rk(M)$.
\begin{enumerate} \def\theenumi{\roman{enumi}}\def\labelenumi{(\theenumi)}
\item
La suite $(W_0(M),\dotsc,W_{r})$ est log-concave; en particulier,
elle est unimodale;
\item
Pour tout entier~$k$ tel que $0\leq k\leq r/2$, on a $W_k(M)\leq W_{r-k}(M)$;
\item
On a $W_0(M)\leq W_1(M)\leq \dotsb \leq W_{\lfloor r/2\rfloor}(M)$.
\end{enumerate}
\end{conj}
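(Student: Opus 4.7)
Le plan est, par analogie avec la démarche menant au théorème~\ref{theo.ahk}, de réaliser les nombres de Whitney de seconde espèce comme les dimensions des composantes homogènes d'un espace vectoriel gradué portant une structure cohomologique riche --- dualité de Poincaré, théorème de Lefschetz difficile et relations de Hodge--Riemann --- dont on déduira les trois assertions de la conjecture.

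Dans le cas représentable, traité par~\cite{huh-wang2017}, cet objet est la cohomologie d'intersection $IH^*(Y,\Q)$ d'une variété projective singulière~$Y$ attachée à l'arrangement d'hyperplans représentant~$M$, appelée \emph{variété de Schubert matroïdale} (ou \emph{plan réciproque}), distincte de la compactification lisse $\tilde V$ considérée plus haut. Cette cohomologie est concentrée en degré pair et l'on a $\dim IH^{2k}(Y,\Q)=W_k(M)$. Le théorème de Lefschetz difficile pour~$IH^*(Y)$, appliqué à une classe ample, donne l'injectivité de la multiplication par cette classe en degré~$<r/2$, d'où l'assertion~(iii). Pour l'inégalité dite \emph{top-heavy}~(ii), on construit une résolution lisse et projective $\tilde Y\to Y$ et l'on applique le théorème de décomposition de Beilinson--Bernstein--Deligne--Gabber: la cohomologie de~$\tilde Y$, qui vérifie la symétrie de Poincaré $H^{2k}\simeq H^{2(r-k)}$, se scinde en $IH^*(Y)$ plus des contributions à multiplicités positives issues des strates, la seule présence de ces contributions entraînant $\dim IH^{2k}\leq \dim IH^{2(r-k)}$. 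Quant à la log-concavité~(i), elle résulte des relations de Hodge--Riemann sur $IH^*(Y)$ par un argument de type Khovanskii--Teissier, analogue à celui du corollaire~\ref{coro.kt}.

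Pour étendre ce schéma au cas général où $M$ n'est pas représentable, la difficulté principale, qui laisse à ce jour la conjecture ouverte, est l'absence de toute variété $Y$ à laquelle appliquer le théorème de décomposition. Une approche combinatoire naturelle serait de construire directement un $A(M)$-module gradué $IH(M)$, de série de Hilbert $\sum_k W_k(M) T^k$, et d'étendre la stratégie inductive par flips matroïdaux de la section précédente pour établir simultanément la dualité de Poincaré, le théorème de Lefschetz difficile et les relations de Hodge--Riemann pour ce module relativement à toute classe ample de $\mathscr K_M$. Il faudrait pour cela un analogue module-théorique du théorème~\ref{theo.dec} décrivant la décomposition de $IH(M,\mathscr P')$ en fonction de $IH(M,\mathscr P)$ et de contributions venant du mineur~$M\contr P$. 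Le point le plus délicat sera, comme dans~\cite{adiprasito-huh-katz2015}, la propagation des inégalités de Hodge--Riemann à travers le flip, qui exige un choix soigneux d'une classe de Lefschetz adaptée.
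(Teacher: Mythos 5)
L'énoncé en question est une \emph{conjecture}, que le texte présente explicitement comme encore ouverte ; le papier n'en donne pas de démonstration, mais seulement la preuve des points~(ii) et~(iii) dans le cas \emph{représentable} (théorème~\ref{theo.hw}, d'après Huh et Wang). Votre texte n'est donc pas une preuve mais un programme, ce que vous reconnaissez d'ailleurs vous-même en fin de rédaction ; il ne peut être accepté comme démonstration de l'énoncé.

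Même restreint au cas représentable, votre compte rendu s'écarte de l'argument réel sur deux points substantiels. D'abord, ce n'est pas $IH^{2k}(Y)$ mais $H^{2k}(Y)$ qui est de dimension $W_k(M)$, engendré par les classes $[\overline{Y_P}]$ pour $P\in M^{(k)}$ ; l'argument de Huh--Wang consiste à injecter $H^{2k}(Y)$ dans $IH^{2k}(Y)$ (via la pureté, d'après Björner--Ekedahl et Weber), puis à invoquer le théorème de Lefschetz difficile pour la cohomologie d'intersection, ce qui rend injectif $c_1(\mathscr L)^{q-p}\cap\colon H^{2p}(Y)\to H^{2q}(Y)$ et donne $W_p(M)\leq W_q(M)$. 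Votre détour par une résolution $\tilde Y\to Y$ et le théorème de décomposition n'est ni nécessaire ni suffisant tel quel : la dualité de Poincaré pour $\tilde Y$ ne se transmet pas d'elle-même en une inégalité « top-heavy » pour $IH(Y)$ ; c'est bien le théorème de Lefschetz difficile pour $IH(Y)$ lui-même qu'il faut, et il est fourni directement par Beilinson--Bernstein--Deligne--Gabber. Ensuite, et c'est la lacune la plus grave, l'assertion selon laquelle la log-concavité~(i) « résulte des relations de Hodge--Riemann sur $IH(Y)$ par un argument de type Khovanskii--Teissier » est infondée : l'argument du corollaire~\ref{coro.kt} produit la log-concavité de nombres d'intersection $\deg(\alpha^{r-k}\beta^k)$ de deux classes nef, alors que les $W_k(M)$ sont des \emph{dimensions} d'espaces gradués et non de tels nombres d'intersection ; aucune déduction de ce type n'est connue, et le point~(i) reste ouvert y compris pour les matroïdes représentables. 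Votre esquisse pour le cas non représentable (un $A(M)$-module gradué jouant le rôle de $IH(Y)$) est une piste raisonnable, mais elle n'est pas développée et ne constitue pas une preuve.
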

L'unimodalité a été conjecturée par \citet{rota1971}
et on doit à \citet{welsh1976} la suggestion que cette suite
pourrait être log-concave. \citet{mason1972}
conjecture même que le quotient $W_k(M)^2/W_{k-1}(M)W_{k+1}(M)$
serait toujours supérieur ou égal à $(k+1)/k$,
qui est la valeur prise par ce quotient
pour la structure de matroïde sur $\abs M$
pour laquelle toute partie est libre (« matroïde libre »).

\begin{theo}[\citealp{huh-wang2017}]
\label{theo.hw}
Soit $M$ un matroïde \emph{représentable} et soit $r=\rk(M)$.
Soit $p,q$ des entiers tels que $0\leq p\leq \inf(q,r-q)$.
Il existe une application injective $\phi\colon M^{(p)}\to M^{(q)}$
telle que $x\subset \phi(x) $ pour tout $x\in M^{(p)}$.
En particulier,  on a $W_p(M)\leq W_{q}(M)$.
\end{theo}
En prenant $p=k$ et $q=p+1$ (resp. $q=r-k$), 
on en déduit en particulier
que les assertions~(ii) et~(iii) de la conjecture~\ref{conj.Wk}
sont satisfaites pour un matroïde représentable.

Dans le cas d'un matroïde représentable de rang~$3$,
on retrouve  le théorème classique de~\cite{debruijn-erdos1948}
selon lequel $n$~points non alignés d'un plan projectif
déterminent au moins $n$~droites.

\subsection{}
Soit $M$ un matroïde, posons $n+1=\Card(\abs M)$ et $r=\rk(M)$;
on suppose que $\abs M=\{0,\dotsc,n\}$.
Supposons $M$ représentable sur un corps~$K$.
Notons $[x_0:\dotsb:x_n]$ les coordonnées homogènes de~$\P_{n,K}$
et identifions le complémentaire de l'hyperplan défini par $x_0=0$
à l'espace affine~$\A^n_K$.
Considérons un sous-espace affine~$L$ de dimension~$r$ de~$K^n$
dont l'adhérence~$X$ dans~$\P_{n,K}$ représente~$M$, au sens où
les hyperplans de coordonnées de~$\P_{n,K}$
découpent sur~$X$ un arrangement d'hyperplans qui représente~$M$.
Pour tout circuit~$C$ de~$M$, il existe
une famille $(a_{C,i})_{i\in C}$ d'éléments de~$K$, non tous nuls,
unique à multiplication près par un élément non nul de~$K$,
telle que $\sum_{i \in C} a_{C,i} x_i = 0$
sur~$X$.

Soit $Y$ l'adhérence de~$L$ dans le produit
des droites projectives~$(\P_{1,K})^n$.
Notons $[z_1,w_1], \dotsc,[z_n,w_n]$ les coordonnées
multi-homogènes de~$\P_{1,K}^n$.
La démonstration du théorème~\ref{theo.hw}
repose sur l'observation suivante, due à \citet{ardila-boocher2016} 
qui décrivent l'idéal homogène de~$Y$.

\begin{prop}
La variété~$Y$ est définie dans $(\P_{1,K})^n$
par la famille d'équations multi-homogènes
\[ \sum_{i\in C} a_{C,i} z_i \prod_{j\in C\setminus\{i\}} w_j = 0 , \]
où $C$ parcourt l'ensemble des circuits du matroïde~$M$.
\end{prop}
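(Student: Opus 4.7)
Le plan est en deux temps : vérifier d'abord que chaque équation s'annule sur $Y$, puis établir la réciproque par une analyse, strate torique par strate torique, de la variété découpée par ces équations.

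Pour la première inclusion, fixons un circuit $C$ et posons $f_C = \sum_{i \in C} a_{C,i} z_i \prod_{j \in C \setminus \{i\}} w_j$. Sur l'ouvert torique $(\mathbf G_{\mathrm m})^n \subset (\P_{1,K})^n$ où tous les $w_i$ sont non nuls, les coordonnées affines $x_i = z_i/w_i$ sont bien définies ; la division de $f_C$ par $\prod_{j \in C} w_j$ restitue la relation linéaire (ou affine, si $0 \in C$) $\sum_{i \in C} a_{C,i} x_i = 0$ qui s'annule sur $L$ par construction. Comme $f_C$ est multi-homogène, donc régulière sur $(\P_{1,K})^n$ tout entier, et s'annule sur la partie dense $L \subset Y$, elle s'annule sur $Y$.

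Soit $I$ l'idéal multi-homogène engendré par les $f_C$ et soit $Y' = V(I) \subseteq (\P_{1,K})^n$, de sorte que $Y \subseteq Y'$. L'idée pour la réciproque est d'exploiter la décomposition torique de $(\P_{1,K})^n$, dont les strates $S_T$ sont indexées par les parties $T \subseteq \{1, \dotsc, n\}$ (avec $w_i = 0$ si et seulement si $i \in T$). Sur $S_T$, l'équation $f_C$ se simplifie : si $\Card(C \cap T) \geq 2$, tous ses termes contiennent un facteur $w_j$ avec $j \in (C \setminus \{i\}) \cap T$ et s'annulent ; si $\Card(C \cap T) = 1$, disons $C \cap T = \{k\}$, seul le terme $i = k$ subsiste et s'écrit $a_{C,k}\, z_k \prod_{j \in C \setminus \{k\}} w_j$, qui ne peut s'annuler en aucun point de $S_T$ (car $a_{C,k} \neq 0$ par minimalité du circuit $C$, $w_j \neq 0$ pour $j \notin T$, et $z_k \neq 0$ dès que $w_k = 0$). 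Par suite, $Y'$ ne rencontre une strate $S_T$ que lorsqu'aucun circuit ne vérifie $\Card(C \cap T) = 1$, condition qui, par la caractérisation matroïdale de la clôture, équivaut exactement à ce que $T^c$ soit un plat de $M$. Les strates admissibles correspondent donc bijectivement aux plats de $M$, comme attendu pour la tropicalisation de $L \cap (\mathbf G_{\mathrm m})^n$.

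L'obstacle principal est de vérifier que, sur chaque strate admissible $S_{F^c}$, la trace de $Y'$ est irréductible et de dimension appropriée, afin de conclure à l'égalité $Y' = Y$. Sur une telle strate, seuls les circuits $C$ entièrement contenus dans le plat $F$ contribuent aux équations, et la restriction de $Y'$ se factorise naturellement selon les structures matroïdales de la restriction $M \restr F$ et de la contraction $M \contr F$ ; un argument par récurrence sur le rang de $M$ identifie $Y' \cap S_{F^c}$ à un produit d'analogues de $Y$ pour ces matroïdes plus petits et fournit la dimension et l'irréductibilité voulues. Une approche alternative, plus algébrique, suivie par \citet{ardila-boocher2016}, consiste à établir que les $f_C$ forment une base de Gröbner universelle de $I$ (pour un ordre monomial adéquat, le terme initial de $f_C$ est un monôme du type $z_i \prod_{j \in C \setminus \{i\}} w_j$), ce qui permet de lire combinatoirement la fonction de Hilbert de $Y'$ et d'obtenir l'égalité avec $Y$ comme schémas multi-homogènes.
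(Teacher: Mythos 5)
Le texte de l'exposé ne démontre pas cette proposition : il la cite comme un résultat d'\citet{ardila-boocher2016}, qui décrivent l'idéal multi-homogène de $Y$ ; il n'y a donc pas de preuve « du papier » à laquelle comparer la vôtre, et celle-ci doit être jugée sur pièces.

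La première moitié de votre argument est correcte : chaque $f_C$ s'annule sur la partie dense $L$ de $Y$, donc sur $Y$, et votre analyse strate par strate (le seul terme survivant lorsque $\Card(C\cap T)=1$ ne s'annule nulle part, parce que $a_{C,k}\neq 0$ par minimalité du circuit et $z_k\neq 0$ là où $w_k=0$) établit bien que $V(I)$ ne rencontre que les strates indexées par les plats. En revanche, la réciproque $V(I)\subseteq Y$ reste un énoncé admis : l'irréductibilité et la dimension de $V(I)\cap S_{F^c}$, que vous renvoyez à « un argument par récurrence » non écrit, sont précisément le cœur de la démonstration. Concrètement, sur la strate $S_{F^c}$ les équations survivantes sont les formes de circuits $C\subseteq F$, et il faut savoir que les formes de circuits d'un matroïde représentable découpent (au moins ensemblistement) le sous-espace linéaire correspondant — énoncé non trivial, équivalent en substance à la proposition elle-même pour $M\restr F$. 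Par ailleurs, votre description du facteur $M\contr F$ sur la strate est inexacte : les coordonnées $i\notin F$ y sont gelées au point $[1:0]$, et la contraction n'apparaît que dans l'adhérence $\overline{Y_F}$. Enfin, même menée à bien, la voie par stratification ne donnerait que l'égalité ensembliste $V(I)=Y$, alors que l'énoncé, tel qu'il est utilisé (« l'idéal homogène de $Y$ »), est de nature schématique ; c'est la raison pour laquelle \citet{ardila-boocher2016} passent par l'argument de bases de Gröbner que vous mentionnez sans le développer.
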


Pour tout plat~$P\in\mathscr P_M$, soit $Y_P$ l'intersection de~$Y$
et du sous-espace localement fermé de $(\P_{1,K})^n$
défini par $w_i=0$ si et seulement si $i\not\in P$.

\begin{coro}\label{coro.partition}
La famille $(Y_P)_{P\in\mathscr P_M}$ est une partition de~$Y$
en parties localement fermées.
De plus, pour tout plat~$P$ de~$M$, $Y_P$
est 
isomorphe à l'espace affine~$\A^{\rk_M(P)}_K$.
\end{coro}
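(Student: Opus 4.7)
The plan is to derive the corollary directly from the explicit defining equations of~$Y$ given by the preceding proposition, by stratifying $Y$ according to the vanishing pattern of the coordinates $w_1,\dots,w_n$ on $(\P_{1,K})^n$. For every subset $I\subset\{1,\dots,n\}$, the locally closed subvariety $Z_I$ of $(\P_{1,K})^n$ on which $w_i=0$ iff $i\in I$ defines, as $I$ varies, a locally closed partition of the ambient space, and hence also of~$Y$; it suffices then to identify which $I$ produce a nonempty stratum and to recognize each such stratum as one of the~$Y_P$.

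Fix $y\in Y$ with $w_i(y)=0$ iff $i\in I$, and evaluate at~$y$ the equation $\sum_{i\in C}a_{C,i}z_i\prod_{j\in C\setminus\{i\}}w_j=0$ attached to an arbitrary circuit $C$ of~$M$ (with the natural convention $z_0=w_0=1$ coming from the affine chart $x_0=1$ used to embed~$L$ into $\A^n_K$). Three cases arise according to~$|C\cap I|$. If $|C\cap I|\geq 2$, every monomial contains at least one vanishing~$w_j$ and the equation holds trivially; if $C\cap I=\emptyset$, the equation reduces after division by $\prod_{j\in C\setminus\{0\}}w_j(y)\neq 0$ to an affine linear relation among the $u_i:=z_i/w_i$; the crucial case is $C\cap I=\{i_0\}$, where every summand other than $i=i_0$ vanishes and the equation forces $a_{C,i_0}z_{i_0}(y)=0$, hence $z_{i_0}(y)=w_{i_0}(y)=0$ (since $a_{C,i_0}\neq 0$ by minimality of the circuit), which is impossible in~$\P_1$. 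Thus $I$ must satisfy $|C\cap I|\neq 1$ for every circuit~$C$, which is exactly the classical matroid characterization that $P:=\abs M\setminus I$ is a flat of~$M$. This identifies the nonempty strata as precisely the $Y_P$ and yields the partition property.

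To describe $Y_P$ itself, fix such a flat~$P$ and trivialize: the factors with $i\notin P$ are frozen at $[1:0]\in\P_1$, while on the factors with $i\in P\cap\{1,\dots,n\}$ the affine chart $u_i:=z_i/w_i$ is available. Under this substitution the case analysis above shows that only circuits $C\subset P$ contribute nontrivial equations, and these become the affine linear equations $\sum_{i\in C\setminus\{0\}}a_{C,i}u_i+a_{C,0}=0$ (with $a_{C,0}:=0$ if $0\notin C$). These are precisely the equations cutting out the intersection of~$L$ with the coordinate subspace $\bigcap_{i\notin P,\,i\geq 1}\{x_i=0\}$ of $\A^n_K$, which by the very definition of the arrangement representing~$M$ is an affine subspace of dimension $\rk_M(P)$; hence $Y_P\simeq\A^{\rk_M(P)}_K$. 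The most delicate step is the middle one: pinning down the forbidden case $|C\cap I|=1$ and recognizing the resulting condition as the defining property of a flat of~$M$; once this bridge between the vanishing pattern on $(\P_{1,K})^n$ and the combinatorics of~$M$ is in place, the remaining identifications reduce to direct substitution in the equations of the previous proposition.
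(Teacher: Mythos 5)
Your first step is correct and is the standard one: stratifying $(\P_{1,K})^n$ by the vanishing pattern $I$ of the coordinates $w_i$ and observing that the case $\Card(C\cap I)=1$ of the circuit equation forces $z_{i_0}=w_{i_0}=0$, which is impossible in $\P_{1}$; combined with the classical criterion that $P$ is a flat if and only if no circuit meets $\abs M\setminus P$ in exactly one element, this yields the partition statement (up to the bookkeeping of the index~$0$, on which more below).

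The second step contains a genuine error. The equations $\sum_{i\in C}a_{C,i}u_i+a_{C,0}=0$, for $C$ running over the circuits contained in~$P$, do \emph{not} cut out $L\cap\bigcap_{i\notin P}\{x_i=0\}$: that intersection is defined by \emph{all} the affine relations valid on~$L$ once one sets $x_i=0$ for $i\notin P$, including those coming from circuits not contained in~$P$, and by the very arrangement convention you invoke it has \emph{codimension} $\rk_M(\langle\abs M\setminus P\rangle)$ in~$L$, hence dimension $r-\rk_M(\langle\abs M\setminus P\rangle)$, which is not $\rk_M(P)$ in general. What the equations indexed by the circuits of $M\restr P$ actually cut out is the \emph{image} of~$L$ under the coordinate projection onto $K^{P}$: the circuit vectors of $M\restr P$ span the orthogonal of this image, which therefore has dimension $\rk(M\restr P)=\rk_M(P)$, as desired. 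The two loci genuinely differ. For the plane $L=\{x_3=x_1+x_2\}$ in $K^3$, whose unique circuit is $\{1,2,3\}$, and the flat $P=\{1\}$, the stratum $Y_P$ is a full affine line --- no circuit is contained in~$P$, so no equation survives, in accordance with the projection of~$L$ onto the $x_1$-axis --- whereas $L\cap\{x_2=x_3=0\}$ is the origin. So \emph{intersection} must be replaced by \emph{projection} throughout, and the dimension count justified via the restriction matroid $M\restr P$. A secondary point to settle is the role of $0\in\abs M$, which has no $\P_{1}$ factor: with your convention $w_0\equiv 1$, every nonempty stratum is labelled by a subset containing~$0$, which cannot exhaust $\mathscr P_M$ (the all-infinity point must be $Y_\emptyset$) and clashes with the dimension claim for a flat such as $\{0\}$; the clean fix is to work with the linear span of~$L$ in $K^{n+1}$ and its closure in a product of $\Card(\abs M)$ projective lines, one factor per element of the ground set.
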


\subsection{}
Bien que $Y$ soit singulière en général,
l'existence de la partition~$(Y_P)$
a des conséquences remarquables sur sa cohomologie.

Pour fixer les idées, choisissons un nombre premier~$\ell$
distinct de la caractéristique de~$K$
et notons $H(Y)$ la cohomologie étale de~$Y_{\overline K}$ à coefficients
dans~$\Q_\ell$. 
Lorsque $K$ est de  caractéristique~$p>0$,
on peut se ramener au cas où $K$ est une clôture
algébrique d'un corps fini.
Lorsque $K$ est de caractéristique zéro,
on peut préférer se ramener au cas où $K=\C$ 
et prendre pour $H(Y)$ la cohomologie
singulière de~$Y(\C)$ munie de sa structure de Hodge mixte.
Dans les deux cas, on dispose d'une filtration par le poids 
sur les espaces de cohomologie $H^k(Y)$.

Notons aussi $IH(Y)$ la \emph{cohomologie d'intersection} de~$Y$
\citep*{goresky-macpherson1983,beilinson-bernstein-deligne1982}:
disons juste que c'est la cohomologie d'un complexe 
borné à cohomologie constructible sur~$Y$,
le complexe d'intersection décalé~$\mathrm{IC}_Y[-r]$,
caractérisé (dans une catégorie dérivée convenable) 
d'une part par la propriété
qu'il prolonge le faisceau constant sur le lieu lisse de~$Y$, et d'autre part 
par les « conditions de perversité » (décalées) sur la dimension du support
de ses faisceaux de cohomologies et de ceux de son dual de Verdier.

Les espaces $H^k(Y)$ et $IH^k(Y)$ sont nuls si $k\not\in[0,2r]$. De plus,
\citep[th.~3.1]{bjorner-ekedahl2009} déduisent du corollaire~\ref{coro.partition} que pour tout entier~$k$,
\begin{enumerate}\def\labelenumi{\theenumi)}
\item
Si $k$ est impair, on a $H^k(Y)=0$;
\item
Si $k$ est pair, 
l'espace $H^k(Y)$ est pur de poids~$k$
et de dimension $W_{k/2}(M)$,
engendré par les classes de cycles $[\overline{Y_P}]$,
pour $P\in M^{(k/2)}$;
\item
L'homomorphisme canonique de~$H^k(Y)$ dans $IH^k(Y)$ est injectif.
\end{enumerate}
Par récurrence sur le cardinal de~$\mathscr P_M$,
les deux premières assertions se déduisent de la cohomologie des espaces
affines et de la suite exacte longue de cohomologie à support
compact associée à une partition de~$Y$ en un ouvert
et le fermé complémentaire.
La troisième assertion
est  démontrée par~\citet[th.~2.1]{bjorner-ekedahl2009}
lorsque $K$ est la clôture algébrique d'un corps fini,
et par~\citet[th.~1.8]{weber2004b} lorsque $K=\C$:
le noyau de l'homomorphisme canonique de $H^k(Y)$
dans~$IH^k(Y)$ est la partie de poids~$<k$ de~$H^k(Y)$.
C'est une conséquence du comportement des poids
par les six opérations cohomologiques usuelles
\citep{deligne1974b,deligne1980}
et du fait que le prolongement intermédiaire d'un faisceau
pervers pur est un faisceau pervers pur de même poids.
 
\subsection{}
La cohomologie d'intersection $IH(Y)$ de~$Y$
est un module sur sa cohomologie usuelle $H(Y)$.
En particulier, pour tout fibré en droites ample~$\mathscr L$
sur~$Y$ et tout entier~$k$ tel que $0\leq k\leq r/2$,
on peut considérer l'homomorphisme de Lefschetz
\[  c_1(\mathscr L)^{r-2k}\cap \colon IH^{2k}(Y)  \to IH^{2r-2k}(Y). \]
Cet homomorphisme est injectif: 
le théorème de Lefschetz difficile
vaut pour la cohomologie d'intersection
\citep*[5.4.10, 6.2.10]{beilinson-bernstein-deligne1982}.

Puisque l'homomorphisme canonique 
de $H^{2k}(Y)$ dans $IH^{2k}(Y)$ est injectif,
il en résulte que l'homomorphisme de Lefschetz
\[  c_1(\mathscr L)^{r-2k}\cap \colon H^{2k}(Y)  \to H^{2r-2k}(Y) \]
est encore injectif. Autrement dit: le théorème de Lefschetz
difficile vaut pour la cohomologie usuelle de~$Y$.
Cela démontre déjà l'inégalité $W_k(M)\leq W_{r-k}(M)$ 
pour tout entier~$k$
tel que $0\leq k\leq r/2$.
Si $p,q$ sont des entiers tels que $0\leq p\leq \inf(q,r-q)$, l'homomorphisme
$c_1(\mathscr L)^{q-p}\cap\colon H^{2p}(Y)\to H^{2q}(Y)$
est a fortiori injectif, 
ce qui entraîne l'inégalité $W_p(M)\leq W_{q}(M)$
du théorème~\ref{theo.hw}.

\subsection{}
Prenons pour fibré en droites~$\mathscr L$ le produit
tensoriel externe des fibrés $\mathscr O(1)$ sur les $n$~facteurs
de $(\P_{1,K})^n$
et considérons
la matrice~$\mathit\Lambda=(\mathit\Lambda_{P,Q})$ 
de l'application $c_1(\mathscr L)^{q-p}\cap$
dans les bases 
$([\overline {Y_P}])_{P\in M^{(p)}}$ de~$H^{2p}(Y)$
et 
$([\overline {Y_Q}])_{Q\in M^{(q)}}$ de~$H^{2q}(Y)$.

L'intersection
$ c_1(\mathscr L)\cap [\overline {Y_P}]$
est somme des termes $[\overline{Y_{\langle P,i\rangle}}]$
pour $i\in\{1,\dotsc,n\}\setminus P$.
Par suite, on a $\mathit\Lambda_{P,Q}=0$
si $P\not\subset Q$.
D'après ce qui précède, la matrice~$\mathit\Lambda$
est de rang maximal~$W_p(M)$,
donc au moins  un mineur de taille $W_p(M)$
en est inversible.
Ce mineur est de type~$P\times P'$, où $P'$ est une partie de~$Q$
de cardinal $W_p(M)$;
une fois choisi un ordre total sur~$P$ et~$P'$, ce mineur
se développe comme une somme de termes de la forme
\[   \pm \prod_{P\in M^{(p)}} \mathit\Lambda_{P, \iota(P)} \]
où $\iota$ parcourt l'ensemble des bijections de~$P$ sur~$P'$.
Au moins l'un de ces termes n'est pas nul: il correspond
à une application injective $\iota\colon M^{(p)}\to M^{(q)}$
telle que $P\subset \iota(P)$  pour tout $P\in M^{(p)}$.
Cela conclut la preuve du théorème~\ref{theo.hw}.

\subsection{}
Dans le but de généraliser ce théorème aux matroïdes non représentables,
on peut observer que la cohomologie de~$Y$ possède un modèle combinatoire,
décrit uniquement en termes du matroïde~$M$.

Soit donc $M$ un matroïde, non nécessairement représentable.
Notons $B(M)$ le groupe abélien libre sur l'ensemble des plats de~$M$,
et soit $(\delta_P)_{P\in\mathscr P_M}$ sa base canonique.
On munit $B(M)$ de la graduation pour laquelle, pour tout entier~$k$,
$B^k(M)$ est le sous-groupe engendré par les fonctions~$\delta_P$,
où $P$ parcourt l'ensemble $M^{(k)}$ des plats de rang~$k$.
On définit ensuite une multiplication dans~$B(M)$ par
\[ \delta_P\cdot \delta_Q = \begin{cases}
 \delta_{P\vee Q} & \text{si $\rk_M(P)+\rk_M(Q)=\rk_M(P\vee Q)$; } \\
0 & \text{sinon.} \end{cases} \]
Pour ces structures, $B(M)$ est une algèbre commutative graduée,
que \cite{huh-wang2017} appellent l'\emph{algèbre de Möbius graduée}
du matroïde~$M$.

Posons aussi $\lambda = \sum\limits_{i\in \abs M} \delta_i\in B^1(M)$.

Lorsque $M$ est représentable, l'unique homomorphisme
d'espaces vectoriels gradués de~$B(M)_{\Q_\ell}$ dans~$H(Y)$ 
qui applique~$\delta_P$ sur~$[\overline{Y_P}]$ est un isomorphisme
d'algèbres graduées, et l'application de Lefschetz $c_1(\mathscr L)\cap$
correspond à la multiplication par~$\lambda$.
\cite{huh-wang2017}
conjecturent alors que pour tout matroïde~$M$ de rang~$r$,
non nécessairement représentable,
l'application $\lambda^k\colon B^k(M)\to B^{r-k}(M)$
est injective
pour tout entier~$k$ tel que $0\leq k\leq r/2$.
 La conclusion du théorème~\ref{theo.hw} s'en déduirait
alors comme ci-dessus.

\bibliographystyle{mynat}
 \bibliography{aclab,acl,logconcave}

\providecommand{\noopsort}[1]{}\providecommand{\url}[1]{\textit{#1}}\providecommand{\url}[1]{\textit{#1}}\providecommand{\arXiv}[1]{\url{arXiv:#1}}
\begin{thebibliography}{52}
\ProvideTextCommand{\guillemotleft}{OT1}{%
  \leavevmode\raise .27ex\hbox{$\scriptscriptstyle\ll$}}
\ProvideTextCommand{\guillemotright}{OT1}{%
  \leavevmode\raise .27ex\hbox{$\scriptscriptstyle\gg$}}
\newcommand{\enquote}[1]{\og #1\fg}
\providecommand{\og}{``}\providecommand{\fg}{''}
\expandafter\ifx\csname natexlab\endcsname\relax\def\natexlab#1{#1}\fi
\expandafter\ifx\csname url\endcsname\relax
  \def\url#1{\texttt{#1}}\fi
\expandafter\ifx\csname urlprefix\endcsname\relax\def\urlprefix{URL }\fi
\newcommand{\Capitalize}[1]{\uppercase{#1}}
\newcommand{\capitalize}[1]{\expandafter\Capitalize#1}
\providecommand{\eprint}[2][]{\url{#2}}

\bibitem[{Adiprasito \emph{\bbletal{}}(2017)Adiprasito, Huh \&
  Katz}]{adiprasito-huh-katz2017}
K.~\textsc{Adiprasito}, J.~\textsc{Huh} \& E.~\textsc{Katz} (2017),
  \enquote{Hodge theory of matroids}. \emph{Notices Amer. Math. Soc.},
  \textbf{64}~(1), \bblpp{} 26--30.

\bibitem[{Adiprasito \emph{\bbletal{}}(2018)Adiprasito, Huh \&
  Katz}]{adiprasito-huh-katz2015}
K.~\textsc{Adiprasito}, J.~\textsc{Huh} \& E.~\textsc{Katz} (2018),
  \enquote{Hodge theory for combinatorial geometries}. \emph{Ann. of Math.},
  \textbf{188}~(2), \bblpp{} 381--452. \arXiv{1511.02888v1}.

\bibitem[{Ardila \& Boocher(2016)}]{ardila-boocher2016}
F.~\textsc{Ardila} \& A.~\textsc{Boocher} (2016), \enquote{The closure of a
  linear space in a product of lines}. \emph{J. Algebraic Combin.},
  \textbf{43}~(1), \bblpp{} 199--235.

\bibitem[{Ardila \& Klivans(2006)}]{ardila-klivans2006}
F.~\textsc{Ardila} \& C.~J. \textsc{Klivans} (2006), \enquote{The {B}ergman
  complex of a matroid and phylogenetic trees}. \emph{J. Combin. Theory Ser.
  B}, \textbf{96}~(1), \bblpp{} 38--49.

\bibitem[{Babaee \& Huh(2017)}]{babaee-huh2017}
F.~\textsc{Babaee} \& J.~\textsc{Huh} (2017), \enquote{A tropical approach to a
  generalized {H}odge conjecture for positive currents}. \emph{Duke Math. J.},
  \textbf{166}~(14), \bblpp{} 2749--2813.

\bibitem[{Baker(2018)}]{baker2018}
M.~\textsc{Baker} (2018), \enquote{Hodge theory in combinatorics}. \emph{Bull.
  Amer. Math. Soc. (N.S.)}, \textbf{55}~(1), \bblpp{} 57--80.
  \arXiv{1705.07960}.

\bibitem[{Be{\u\i}linson \emph{\bbletal{}}(2018)Be{\u\i}linson, Bernstein,
  Deligne \& Gabber}]{beilinson-bernstein-deligne1982}
A.~A. \textsc{Be{\u\i}linson}, J.~\textsc{Bernstein}, P.~\textsc{Deligne} \&
  O.~\textsc{Gabber} (2018), \enquote{Faisceaux pervers}. \emph{Analyse et
  topologie sur les espaces singuliers, {I} ({L}uminy, 1981)},
  Ast\'erisque~\textbf{100}, \bblpp{} 5--171, Soc. Math. France, Paris,
  2\textsuperscript e \bbledition{}.

\bibitem[{Berkovich(1990)}]{berkovich1990}
V.~G. \textsc{Berkovich} (1990), \emph{Spectral theory and analytic geometry
  over non-{A}rchimedean fields}, Mathematical Surveys and
  Monographs~\textbf{33}, American Mathematical Society, Providence, RI.

\bibitem[{Billera(1989)}]{billera1989}
L.~J. \textsc{Billera} (1989), \enquote{The algebra of continuous piecewise
  polynomials}. \emph{Adv. Math.}, \textbf{76}~(2), \bblpp{} 170--183.

\bibitem[{Billera \& Lee(1980)}]{billera-lee1980}
L.~J. \textsc{Billera} \& C.~W. \textsc{Lee} (1980), \enquote{Sufficiency of
  {M}c{M}ullen's conditions for {$f$}-vectors of simplicial polytopes}.
  \emph{Bull. Amer. Math. Soc. (N.S.)}, \textbf{2}~(1), \bblpp{} 181--185.

\bibitem[{Bj\"orner \& Ekedahl(2009)}]{bjorner-ekedahl2009}
A.~\textsc{Bj\"orner} \& T.~\textsc{Ekedahl} (2009), \enquote{On the shape of
  {B}ruhat intervals}. \emph{Ann. of Math. (2)}, \textbf{170}~(2), \bblpp{}
  799--817.

\bibitem[{Brenti(1994)}]{brenti1994}
F.~\textsc{Brenti} (1994), \enquote{Log-concave and unimodal sequences in
  algebra, combinatorics, and geometry: an update}. \emph{Jerusalem
  combinatorics '93}, Contemp. Math.~\textbf{178}, \bblpp{} 71--89, Amer. Math.
  Soc., Providence, RI.

\bibitem[{Bruhn \emph{\bbletal{}}(2013)Bruhn, Diestel, Kriesell, Pendavingh \&
  Wollan}]{bruhn-diestel-kriesell-pendavingh-wollan2013}
H.~\textsc{Bruhn}, R.~\textsc{Diestel}, M.~\textsc{Kriesell},
  R.~\textsc{Pendavingh} \& P.~\textsc{Wollan} (2013), \enquote{Axioms for
  infinite matroids}. \emph{Adv. Math.}, \textbf{239}, \bblpp{} 18--46.

\bibitem[{de~Bruijn \& Erd\H{o}s(1948)}]{debruijn-erdos1948}
N.~G. \textsc{de~Bruijn} \& P.~\textsc{Erd\H{o}s} (1948), \enquote{On a
  combinatorial problem}. \emph{Indagationes Math.}, \textbf{10}, \bblpp{}
  421--423.

\bibitem[{Brylawski(1982)}]{brylawski1982}
T.~\textsc{Brylawski} (1982), \enquote{The {T}utte polynomial. {P}art {I}:
  {G}eneral theory}. \emph{Matroid theory and its applications}, C.I.M.E.
  Summer Sch.~\textbf{83}, \bblpp{} 125--275, Springer, Heidelberg.

\bibitem[{Burago \& Zalgaller(1988)}]{burago-zalgaller1988}
Y.~D. \textsc{Burago} \& V.~A. \textsc{Zalgaller} (1988), \emph{Geometric
  inequalities}, Grundlehren der Mathematischen Wissenschaften~\textbf{285},
  Springer-Verlag, Berlin. Translated from the Russian by A. B. Sosinski\u\i ,
  Springer Series in Soviet Mathematics.

\bibitem[{de~Cataldo \& Migliorini(2005)}]{decataldo-migliorini2005}
M.~A.~A. \textsc{de~Cataldo} \& L.~\textsc{Migliorini} (2005), \enquote{The
  {H}odge theory of algebraic maps}. \emph{Ann. Sci. \'Ecole Norm. Sup. (4)},
  \textbf{38}~(5), \bblpp{} 693--750.

\bibitem[{De~Concini \& Procesi(1995)}]{deconcini-procesi1995a}
C.~\textsc{De~Concini} \& C.~\textsc{Procesi} (1995), \enquote{Wonderful models
  of subspace arrangements}. \emph{Selecta Math. (N.S.)}, \textbf{1}~(3),
  \bblpp{} 459--494.

\bibitem[{Deligne(1974)}]{deligne1974b}
P.~\textsc{Deligne} (1974), \enquote{Th\'eorie de {H}odge. {III}}. \emph{Publ.
  Math. Inst. Hautes {\'E}tudes Sci.}, \textbf{44}, \bblpp{} 5--77.

\bibitem[{Deligne(1980)}]{deligne1980}
P.~\textsc{Deligne} (1980), \enquote{La conjecture de {W}eil, {II}}.
  \emph{Publ. Math. Inst. Hautes {\'E}tudes Sci.}, \textbf{52}, \bblpp{}
  137--252.

\bibitem[{Einsiedler \emph{\bbletal{}}(2006)Einsiedler, Kapranov \&
  Lind}]{einsiedler-kapranov-lind2006}
M.~\textsc{Einsiedler}, M.~\textsc{Kapranov} \& D.~\textsc{Lind} (2006),
  \enquote{Non-{A}rchimedean amoebas and tropical varieties}. \emph{J. Reine
  Angew. Math.}, \textbf{601}, \bblpp{} 139--157.

\bibitem[{Eisenbud \& Levine(1977)}]{eisenbud-levine-teissier1977}
D.~\textsc{Eisenbud} \& H.~I. \textsc{Levine} (1977), \enquote{An algebraic
  formula for the degree of a {$C^{\infty }$} map germ}. \emph{Ann. of Math.
  (2)}, \textbf{106}~(1), \bblpp{} 19--44. Appendix by Bernard Teissier, ``Sur
  une in\'egalit\'e \`a la Minkowski pour les multiplicit\'es''.

\bibitem[{Elias \& Williamson(2014)}]{elias-williamson2014}
B.~\textsc{Elias} \& G.~\textsc{Williamson} (2014), \enquote{The {H}odge theory
  of {S}oergel bimodules}. \emph{Ann. of Math. (2)}, \textbf{180}~(3), \bblpp{}
  1089--1136.

\bibitem[{Feichtner \& Yuzvinsky(2004)}]{feichtner-yuzvinsky2004}
E.~M. \textsc{Feichtner} \& S.~\textsc{Yuzvinsky} (2004), \enquote{Chow rings
  of toric varieties defined by atomic lattices}. \emph{Invent. Math.},
  \textbf{155}~(3), \bblpp{} 515--536.

\bibitem[{Fulton(1993)}]{fulton1993}
W.~\textsc{Fulton} (1993), \emph{Introduction to toric varieties}, Annals of
  Mathematics Studies~\textbf{131}, Princeton Univ. Press, Princeton, NJ. The
  William H. Roever Lectures in Geometry.

\bibitem[{Goresky \& MacPherson(1983)}]{goresky-macpherson1983}
M.~\textsc{Goresky} \& R.~\textsc{MacPherson} (1983), \enquote{Intersection
  homology. {II}}. \emph{Invent. Math.}, \textbf{72}~(1), \bblpp{} 77--129.

\bibitem[{Heron(1972)}]{heron1972}
A.~P. \textsc{Heron} (1972), \enquote{Matroid polynomials}. \emph{Combinatorics
  ({P}roc. {C}onf. {C}ombinatorial {M}ath., {M}ath. {I}nst., {O}xford, 1972)},
  \bblpp{} 164--202, Inst. Math. Appl., Southend-on-Sea.

\bibitem[{Hoggar(1974)}]{hoggar1974}
S.~G. \textsc{Hoggar} (1974), \enquote{Chromatic polynomials and logarithmic
  concavity}. \emph{J. Combinatorial Theory Ser. B}, \textbf{16}, \bblpp{}
  248--254.

\bibitem[{Huh(2012)}]{huh2012}
J.~\textsc{Huh} (2012), \enquote{Milnor numbers of projective hypersurfaces and
  the chromatic polynomial of graphs}. \emph{J. Amer. Math. Soc.},
  \textbf{25}~(3), \bblpp{} 907--927.

\bibitem[{Huh \& Katz(2012)}]{huh-katz2012}
J.~\textsc{Huh} \& E.~\textsc{Katz} (2012), \enquote{Log-concavity of
  characteristic polynomials and the {B}ergman fan of matroids}. \emph{Math.
  Ann.}, \textbf{354}~(3), \bblpp{} 1103--1116.

\bibitem[{Huh \& Wang(2017)}]{huh-wang2017}
J.~\textsc{Huh} \& B.~\textsc{Wang} (2017), \enquote{Enumeration of points,
  lines, planes, etc}. \emph{Acta Math.}, \textbf{218}~(2), \bblpp{} 297--317.

\bibitem[{Itenberg \emph{\bbletal{}}(2016)Itenberg, Katzarkov, Mikhalkin \&
  Zharkov}]{itenberg-katzarkov-mikhalkin-zharkov2016}
I.~\textsc{Itenberg}, L.~\textsc{Katzarkov}, G.~\textsc{Mikhalkin} \&
  I.~\textsc{Zharkov} (2016), \enquote{Tropical homology}. \arXiv{1604.01838}.

\bibitem[{Karu(2004)}]{karu2004}
K.~\textsc{Karu} (2004), \enquote{Hard {L}efschetz theorem for nonrational
  polytopes}. \emph{Invent. Math.}, \textbf{157}~(2), \bblpp{} 419--447.

\bibitem[{Khovanskii(1988)}]{khovanskii1988}
A.~G. \textsc{Khovanskii} (1988), \emph{Algebra and mixed volumes}, \bblpp{}
  182--207. In~\cite{burago-zalgaller1988}.

\bibitem[{Lenz(2013)}]{lenz2013}
M.~\textsc{Lenz} (2013), \enquote{The {$f$}-vector of a representable-matroid
  complex is log-concave}. \emph{Adv. in Appl. Math.}, \textbf{51}~(5),
  \bblpp{} 543--545. Version augmentée, \arXiv{1106.2944}.

\bibitem[{Mason(1972)}]{mason1972}
J.~H. \textsc{Mason} (1972), \enquote{Matroids: unimodal conjectures and
  {M}otzkin's theorem}. \emph{Combinatorics ({P}roc. {C}onf. {C}ombinatorial
  {M}ath., {M}ath. {I}nst., {O}xford, 1972)}, \bblpp{} 207--220, Inst. Math.
  Appl., Southend-on-Sea.

\bibitem[{McMullen(1993)}]{mcmullen1993}
P.~\textsc{McMullen} (1993), \enquote{On simple polytopes}. \emph{Invent.
  Math.}, \textbf{113}~(2), \bblpp{} 419--444.

\bibitem[{Nelson(2018)}]{nelson2018}
P.~\textsc{Nelson} (2018), \enquote{Almost all matroids are nonrepresentable}.
  \emph{Bull. London Math. Soc.}

\bibitem[{Oxley(1992)}]{oxley1992}
J.~G. \textsc{Oxley} (1992), \emph{Matroid theory}, Oxford Science
  Publications, The Clarendon Press, Oxford University Press, New York.

\bibitem[{Read(1968)}]{read1968}
R.~C. \textsc{Read} (1968), \enquote{An introduction to chromatic polynomials}.
  \emph{J. Combinatorial Theory}, \textbf{4}, \bblpp{} 52--71.

\bibitem[{Riche(2018)}]{riche2018}
S.~\textsc{Riche} (2018), \enquote{La théorie de {H}odge des bimodules de
  {S}oergel (d'après {S}oergel et {E}lias--{W}illiamson)}. Séminaire
  Bourbaki. Exposé \no 1139, \arXiv{1711.02464v1}.

\bibitem[{Rota(1971)}]{rota1971}
G.-C. \textsc{Rota} (1971), \enquote{Combinatorial theory, old and new}.
  \emph{Actes du {C}ongr\`es {I}nternational des {M}ath\'ematiciens ({N}ice,
  1970), {T}ome 3}, \bblpp{} 229--233, Gauthier-Villars, Paris.

\bibitem[{Stanley(1980)}]{stanley1980b}
R.~P. \textsc{Stanley} (1980), \enquote{The number of faces of a simplicial
  convex polytope}. \emph{Adv. in Math.}, \textbf{35}~(3), \bblpp{} 236--238.

\bibitem[{Stanley(1987)}]{stanley1987}
R.~P. \textsc{Stanley} (1987), \enquote{Generalized {$H$}-vectors, intersection
  cohomology of toric varieties, and related results}. \emph{Commutative
  algebra and combinatorics ({K}yoto, 1985)}, Adv. Stud. Pure
  Math.~\textbf{11}, \bblpp{} 187--213, North-Holland, Amsterdam.

\bibitem[{Stanley(1989)}]{stanley1989}
R.~P. \textsc{Stanley} (1989), \enquote{Log-concave and unimodal sequences in
  algebra, combinatorics, and geometry}. \emph{Graph theory and its
  applications: {E}ast and {W}est ({J}inan, 1986)}, Ann. New York Acad.
  Sci.~\textbf{576}, \bblpp{} 500--535, New York Acad. Sci., New York.

\bibitem[{Teissier(1979)}]{teissier1979}
B.~\textsc{Teissier} (1979), \enquote{Du th\'eor\`eme de l'index de {H}odge aux
  in\'egalit\'es isop\'erim\'etriques}. \emph{C. R. Acad. Sci. Paris S\'er.
  A-B}, \textbf{288}~(4), \bblpp{} A287--A289.

\bibitem[{Weber(2004)}]{weber2004b}
A.~\textsc{Weber} (2004), \enquote{Pure homology of algebraic varieties}.
  \emph{Topology}, \textbf{43}~(3), \bblpp{} 635--644.

\bibitem[{Welsh(1976)}]{welsh1976}
D.~J.~A. \textsc{Welsh} (1976), \emph{Matroid theory}, Academic Press [Harcourt
  Brace Jovanovich, Publishers], London-New York. L. M. S. Monographs, No. 8.

\bibitem[{White(1986)}]{white1986}
N.~\textsc{White} (\bbleditor{}) (1986), \emph{Theory of matroids},
  Encyclopedia of Mathematics and its Applications~\textbf{26}, Cambridge
  University Press, Cambridge.

\bibitem[{White(1987)}]{white1987}
N.~\textsc{White} (\bbleditor{}) (1987), \emph{Combinatorial geometries},
  Encyclopedia of Mathematics and its Applications~\textbf{29}, Cambridge
  University Press, Cambridge.

\bibitem[{Whitney(1935)}]{whitney1935}
H.~\textsc{Whitney} (1935), \enquote{On the abstract properties of linear
  dependence}. \emph{Amer. J. Math.}, \textbf{57}~(3), \bblpp{} 509--533.

\bibitem[{Williamson(2017)}]{williamson2017}
G.~\textsc{Williamson} (2017), \enquote{The {H}odge theory of the decomposition
  theorem}. \emph{Ast\'erisque}, \textbf{390}, \bblpp{} 335--367. S\'eminaire
  Bourbaki. Vol. 2015/2016. Expos\'e \no 1115, \arXiv{1603.09235v1}.

\end{thebibliography}

\end{document}